\newcommand{\diag}{\text {\rm diag}}
\newcommand{\Hom}{{\rm Hom}}
\def\i{^{-1}}
\def\ge{\geqslant}
\def\le{\leqslant}
\def\<{\langle}
\def\>{\rangle}
\def\bb{\bold{b}}
\def\a{\alpha}
\def\d{\delta}
\def\D{\Delta}
\def\e{\epsilon}
\def\o{\omega}
\def\s{\sigma}
\def\t{\tau}
\def\l{\lambda}
\def\ZZ{\mathbb Z}
\def\AA{\mathbb A}
\def\GG{\mathbb G}
\def\NN{\mathbb N}
\def\QQ{\mathbb Q}
\def\FF{\mathbb F}
\def\RR{\mathbb R}
\def\PP{\mathbb P}
\def\co{\mathcal O}
\def\car{\mathcal R}
\def\car{\mathcal R}
\def\tW{\tilde W}
\def\tw{{\tilde w}}
\def\fs{\mathfrak S}
\def\GL{{\rm GL}}
\def\der{{\rm der}}
\def\bmu{{\mu_\bullet}}
\def\bl{{\lambda_\bullet}}
\def\bbl{\lambda_\bullet}
\def\bb{{b_\bullet}}
\def\be{{e_\bullet}}
\def\bv{v_\bullet}
\def\btw{{\tw_\bullet}}
\def\bw{{w_\bullet}}
\def\bt{{\tau_\bullet}}
\def\bg{g_\bullet}
\def\bchi{{\chi_\bullet}}
\def\bs{{\sigma_\bullet}}
\def\bbeta{{\eta_\bullet}}
\def\hbl{{\hat{\lambda}_\bullet}}
\def\hl{\hat{\lambda}}
\def\he{\hat{\eta}}
\numberwithin{equation}{subsection}
\theoremstyle{plain}
\newtheorem{thm}{Theorem}[section]
\newtheorem*{thm*}{Theorem}
 \newtheorem*{cor*}{Corollary}
 \newtheorem{prop}[thm]{Proposition}
 \newtheorem{lem}[thm]{Lemma}
 \newtheorem{cor}[thm]{Corollary}
 \newtheorem{rmk}[thm]{Remark}
\newtheorem{conjecture}[thm]{Conjecture}
\theoremstyle{definition}
\theoremstyle{remark}
\newtheorem*{claim*}{Claim}
\begin{document}

\title{Connectedness of Kisin varieties associated to  absolutely irreducible Galois representations}
\author{Miaofen Chen and Sian Nie}
\date{}
\address{Department of Mathematics\\
	Shanghai Key Laboratory of PMMP\\
	East China Normal University\\
	No. 500, Dong Chuan Road\\
	Shanghai 200241, China}\email{mfchen@math.ecnu.edu.cn}

\address{Academy of Mathematics and Systems Science, Chinese Academy of Sciences\\
	No. 55, Zhongguancun East Road\\
	Beijing 100190, China}\email{niesian@amss.ac.cn}


\maketitle

\begin{abstract}
We consider the Kisin variety associated to a $n$-dimensional absolutely irreducible mod $p$ Galois representation $\bar\rho$ of a $p$-adic field $K$ and a cocharacter $\mu$. Kisin conjectured that the Kisin variety is connected in this case. We show that Kisin's conjecture holds if $K$  is totally ramfied with $n=3$ or $\mu$ is of a very particular form. As an application, we also get a connectedness result for the deformation ring associated to $\bar\rho$ of given Hodge-Tate weights. We also give counterexamples to show Kisin's conjecture does not hold in general.
\end{abstract}

\section{Introduction}
Let $K$ be a finite extension of $\mathbb{Q}_p$ with $p>2$. Let $\bar\rho: \Gamma_K\rightarrow \mathrm{GL}_n(\FF)$ be a n-dimensional continuous representation of the absolute Galois group $\Gamma_K$ over a finite field $\FF$ for any $n\in\NN$. Kisin constructed in \cite{K09} a projective scheme $C_{\mu}(\bar\rho)$ over $\FF$ which parametrizes the finite flat group schemes over $\mathcal{O}_K$ with generic fiber $\bar\rho$ satisfying some determinant condition determined by $\mu$. These varieties were later called \emph{Kisin varieties} by Pappas and Rapoport (\cite{PR}). The set of connected components $\pi_0(C_{\mu}(\bar\rho))$ of the Kisin variety $C_{\mu}(\bar\rho)$ is of particular interest among the other topological invariants. Kisin showed in \cite{K09} that $\pi_0(C_{\mu}(\bar\rho))$ is in bijection with the set of connected components of the generic fiber of the flat deformation ring of $\bar\rho$ with condition on Hodge-Tate weights related to $\mu$. Moreover, Kisin determined $\pi_0(C_{\mu}(\bar\rho))$ when $K$ is totally ramified over $\QQ_p$ with $\bar\rho$ two dimensional and gave application to modularity lifting theorem.

Kisin also conjectured for  $\pi_0(C_{\mu}(\bar\rho))$ when $\bar\rho$ is indecomposable (\cite{K09} 2.4.16). When $n=2$, the conjecture says precisely that the Kisin variety has at most two connected components and this conjecture has been solved by Gee \cite{G}, Hellmann \cite{H}, Imai \cite{I} and Kisin \cite{K09}. For general $n$, we know that $C_{\mu}(\bar\rho)$ consists at most 1 point if the ramification index $e(K/\QQ_p)<p-1$ by Raynaud's result (\cite{R},3.3.2). Besides this result, very little is known about the set of connected components. Recently, Erickson and Levin studied in \cite{EL} Harder-Narasimhan theory for Kisin modules and defined a stratification on Kisin varieties by Harder-Narasimhan polygons.

In this paper, we want to study the connectedness of Kisin varieties $C_{\mu}(\bar\rho)$ for an absolutely irreducible reprensentation $\bar\rho$. In this case, Kisin's conjecture says:

\begin{conjecture}[Kisin] If $\bar\rho$ is absolutely irreducible, then $C_{\mu}(\bar\rho)$ is connected.
\end{conjecture}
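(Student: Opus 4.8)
The plan is to translate the conjecture into a problem about Kisin modules (equivalently, $\phi$-modules over the ramified power series ring $\kk[[u]]$) and to exploit irreducibility of $\bar\rho$ to rigidify these modules heavily. First I would recall, following Kisin \cite{K09}, that $C_\mu(\bar\rho)$ parametrizes lattices $\mathfrak{M}$ inside the (unique, étale) $\phi$-module attached to $\bar\rho$, subject to the bound on the elementary divisors of $\phi$ prescribed by $\mu$; so the $\FF$-points are a set of lattices stable under $\phi$ up to the $\mu$-bound, and connectedness is a statement about how these lattices can be moved continuously in a family. The key leverage from absolute irreducibility is that the étale $\phi$-module has no nontrivial $\phi$-stable submodules defined over the residue field, which forces any HN filtration (in the sense of Erickson--Levin \cite{EL}) on a point of $C_\mu(\bar\rho)$ to be nonsplit, and more importantly makes the image of $\Gamma_K$ act irreducibly, so the associated Dieudonné/Kisin module is as far as possible from being decomposable. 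I would then reduce to the two regimes named in the abstract: (i) $K$ totally ramified, $n=3$; (ii) $\mu$ of the special ``miniscule-at-one-place'' form.

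In regime (ii), where $\mu$ has a single nontrivial Hodge--Tate weight (the ``very particular form''), the plan is to show that the corresponding Kisin variety is a point, or at least a projective space bundle over a point, by a direct elementary-divisor argument: the $\mu$-bound is so restrictive that the lattice chain between $\mathfrak{M}$ and $\phi^*\mathfrak{M}$ has length one, which pins down $\mathfrak{M}$ up to the action of the automorphisms of the étale $\phi$-module, and irreducibility makes that automorphism group a torus (Schur's lemma over the relevant field), whose orbit on the relevant Grassmannian is connected. For regime (i), $n=3$ and totally ramified $K$, I would argue by examining the possible HN polygons: absolute irreducibility rules out the polygons with a vertex at a lattice point of integer slope, leaving only the polygons that are either a single segment or have a ``fractional'' break, and in each case the HN stratum is shown to be irreducible (hence connected) by an explicit normal-form computation for $3\times 3$ $\phi$-matrices over $\kk[[u]]$ with the given $\mu$-bound, reminiscent of Kisin's $n=2$ analysis but one dimension up. One then checks that the closures of these strata overlap, so the whole variety is connected.

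The main obstacle I expect is the HN-stratum analysis in regime (i): unlike $n=2$, for $n=3$ there are genuinely new HN polygons (with a middle vertex), and showing that the corresponding locally closed subscheme of $C_\mu(\bar\rho)$ is connected requires either a clean parametrization of the $\phi$-stable lattices with a prescribed $2$-step flag, or a degeneration argument connecting each stratum to the ``most generic'' (single-segment) stratum. The subtlety is that a priori the middle-vertex stratum could be disconnected, so one must use absolute irreducibility again at the level of the whole family — not just pointwise — to exclude a component on which the HN filtration would lift to an actual $\Gamma_K$-stable subrepresentation. A secondary difficulty is bookkeeping the determinant/$\mu$ condition: $\det \mathfrak{M}$ is forced to be a fixed rank-one Kisin module, which both constrains and is constrained by the elementary divisors, and getting these compatibilities exactly right (especially the $\fmod$ of the top exterior power) is where the totally-ramified and $n=3$ hypotheses are really used.
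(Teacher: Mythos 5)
Before anything else: the statement you are asked about is Kisin's conjecture itself, and the paper's main point is that this conjecture is \emph{false} in general. The paper exhibits explicit counterexamples (in \S \ref{counterexmaples}): for $G=\GL_4$ with $b=u^{(2,0,2,0)}(1243)$, $\mu=(2p-1,p,p,1)$, and for $G=\mathrm{Res}_{k|\FF_p}\GL_3$ with $[k:\FF_p]=2$, the Kisin variety is a disconnected two-point set even though $b$ is simple. So no proof of the conjecture can exist, and any correct argument must restrict from the outset to the two regimes of Theorems \ref{thm A} and \ref{thm B}. Your proposal never registers this; you treat the reduction to the two regimes as a proof strategy rather than as a genuine limitation of the statement. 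This is the most significant gap.

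Beyond that, even within the two regimes your mechanism does not match what is true or what the paper does. The paper's tool is a \emph{semi-module stratification} $C_\mu(b)=\sqcup_{\l\in Y} C_\mu^\l(b)$ by Iwahori orbits $Iu^\l G(\co_L)/G(\co_L)$, imported from affine Deligne--Lusztig variety theory (\S \ref{section_semimodule}); it is not the Harder--Narasimhan stratification of Erickson--Levin, which the paper mentions only as context. For regime (ii) you claim the $\mu$-bound pins the lattice chain to length one and makes $C_\mu(\bar\rho)$ a point (or a projective bundle over a point). That is false: $\mu_{\tau,1}$ can be arbitrarily large, so the elementary divisors are $(u^{a_\tau},1,\dots,1)$ with $a_\tau$ large, and already for $n=2$ (Hellmann's case, which regime (ii) contains) the Kisin variety has positive dimension. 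What the paper actually shows is that after reducing to the minuscule multi-copy situation (Lemma \ref{lemma_multi}), every stratum is irreducible of computable dimension (Proposition \ref{dim}), exactly one stratum is zero-dimensional (Theorem \ref{thm A'}), and every positive-dimensional stratum, being a non-projective closed subscheme of an affine cell, must degenerate into a strictly smaller cell --- a combinatorial induction on $\d(\cdot)$ and $h(\cdot)$ (Lemma \ref{delta}) then forces all strata into one component. Your ``Schur's lemma makes the automorphism group a torus, whose orbit is connected'' observation doesn't engage with any of this, because $C_\mu(b)$ is not a single orbit of that torus.

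For regime (i) you candidly flag that the HN-stratum analysis for $n=3$ is the hard part and that you don't know how to show the middle-vertex stratum is connected or how to glue closures. The paper avoids all of this: in the $\GL_3$ totally ramified case it identifies the relevant torus-fixed points $u^\l$ with $\l$ in the set $S=\{\l:\l^\natural\le\mu\}$, connects $u^\l$ and $u^{\l-\a^\vee}$ by an explicit affine line $x\mapsto u^\l U_\a(u^{-1}x)$ whose projective closure lands at $u^{\l-\a^\vee}$, and then solves a rank-$2$ coroot-lattice combinatorics problem (Claims 1--3 in Theorem \ref{thm B}) showing $S$ is connected under single-coroot steps. This is a concrete, finite argument of a completely different flavor from the HN-degeneration scheme you sketch, and it uses the totally-ramified and $n=3$ hypotheses precisely to keep the coroot combinatorics two-dimensional. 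In short: the hole you acknowledge is real, and the missing idea is the semi-module stratification together with the explicit $\AA^1$-degeneration between torus fixed points --- not a refinement of the HN picture.
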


We will prove the  conjecture when $\mu$ is of a particular form or $n=3$ with $K$ totally ramified. We also give counterexamples to show Kisin's conjecture does not hold in general.

We first reformulate Kisin variety in a group theoretic way.

Let $k$ be the residue field of $K$. Let $\pi$ be a uniformizer of $K$ and let $\pi_n:=\pi^{\frac{1}{p^n}}$ be a compatible system of $p^n$-th root of $\pi$ for all $n\in\mathbb{N}$. Denote $K_{\infty}:=\cup_{n} K(\pi_n)$. Then $\bar\rho_{|\Gamma_{K_{\infty}}}$ is still absolutely irreducible (\cite{B} 3.4.3).   By \cite{W}, the absolute Galois group $\Gamma_{K_{\infty}}$ is canonically isomorphic to the absolute Galois group $\Gamma_{k((u))}$ of the field of Laurent series. Hence the $\mathbb{F}$-representations of $\Gamma_{K_{\infty}}$ can be described in terms of \'etale $\varphi$-modules over $k\otimes_{\mathbb{F}_p}\mathbb{F}((u))$ (cf.\cite{FO}), where the Frobenius $\varphi$ acts on $k\otimes_{\mathbb{F}_p}\mathbb{F}((u))$ as identity on $\mathbb{F}$ and as $p$-power map on $k((u))$.
Let $(N_{\bar\rho}, \Phi_{\bar\rho})$ be the $\varphi$-module of rank $n$ over $k\otimes_{\FF_p}\FF((u))$ associated to the Tate-twist $\bar\rho(-1)_{|\Gamma_{K_{\infty}}}$.  Then
\[(N_{\bar\rho}, \Phi_{\bar\rho})\simeq ((k\otimes_{\FF_p}\FF((u)))^n, b\varphi)\] for some $b\in G(\FF((u)))$ with $G=\mathrm{Res}_{k|\FF_p}(\mathrm{GL}_n)$. In the following, we will write  $C_{\mu}(b)$ for the Kisin variety $C_{\mu}(\bar\rho)$.

Let $T$ be a maximal torus of $G$. Let $Y^+$ denotes the set of dominant cocharacters of $T$ with respect to a fixed Borel subgroup of $G$ containing $T$. It's a partially ordered set with Bruhat order denoted by $\leq$.

 Let $\bar\FF_p$ be an algebraic closure of the finite field $\FF_p$ and  $L:=\bar\FF_p((u))$ be the field of Laurent series with coefficient in $\bar\FF_p$. By Breuil-Kisin classification \cite{K06}, the Kisin variety can be described as follows:

\[C_{\mu}(b)(\bar\FF_p)=\{g\in G(L)/G(\mathcal{O}_L)| g^{-1}b\sigma (g)\in \cup_{\nu\in Y^+\atop
\nu\leq \mu} G(\mathcal{O}_L)u^{\nu}G(\mathcal{O}_L) \}\]
where the determinant condition $\mu$ can be interpreted as a dominant cocharacter $\mu\in Y^+$ and $\sigma: G(L)\rightarrow G(L)$ is induced from $\varphi$. So the Kisin variety $C_{\mu}(b)\otimes \bar\FF_p$ can be seen as a closed subvariety in the affine Grassmannian $\mathcal{F}_G=G(L)/G(\mathcal{O}_L)$ which is an ind-scheme over $\bar\FF_p$.
The isomorphism class of $C_{\mu}(b)\otimes \bar\FF_p$ depends on the $\sigma$-conjugacy class of $b$ in  $G(L)$.

The group theoretic definition of Kisin varieties resembles that of affine Deligne-Lusztig varieties which are moduli spaces of $p$-divisible groups with additional structures. Their only difference is the definition of Frobenius $\sigma$. For affine Deligne-Lusztig varieties, it's an isomorphism. But it's no longer the case for Kisin varieties. This change makes Kisin varieties much harder to study compared to affine Deligne-Lusztig varieties. Much is known about the structure of affine Deligne-Lusztig varieties by the study of many people, such as the non-emptyness (\cite{G}, \cite{GHN},\cite{He}, \cite{KR},\cite{Lu},\cite{RR}), dimension formula (\cite{GHKR}, \cite{Ha}, \cite{V06}, \cite{Z}), set of connected components (\cite{CKV}, \cite{CN19}, \cite{CN20}, \cite{HZ}, \cite{N18}, \cite{V08}) and set of irreducible components up to group action (\cite{HV}, \cite{N}, \cite{XZ},\cite{ZZ}). One of the powerful tools to study affine Deligne-Lusztig varieties is the semi-module stratification which arises in a group theoretic way. For example, de Jong and Oort used this stratification to show that each connected component of superbasic affine Deligne-Lusztig variety for $\GL_n$ is irreducible (\cite{dJO}). The second author used this stratification to give a proof of a conjecture of Xinwen Zhu and the first author about the irreducible components of affine Deligne-Lusztig varieties (\cite{N}, with another proof using twisted orbital integrals given by Zhou and Zhu \cite{ZZ}).  In this paper, we want to apply this tool to the study of Kisin varieties. The reason we restrict ourselves to $\bar\rho$ absolutely irreducible case is that we want $b$ to be simple (cf. \S \ref{section_classification simple modules}) so that we can use Caruso's classification of simple $\varphi$-modules (\cite{C}) to get a good representative $b$ in its $\sigma$-conjugacy class (Proposition \ref{simple}).

Note that $G_{|\bar\FF_p}\simeq\prod_{\tau\in \mathrm{Hom}(k, \bar\FF_p)}\mathrm{GL}_n$. An element $\mu\in Y^+$ can be written in the form $\mu=(\mu_{\tau})_{\tau}$ with \[\mu_{\tau}=(\mu_{\tau, 1},\cdots, \mu_{\tau,n})\in\ZZ^{n,+}:=\{(a_1,\cdots,a_n)\in\ZZ^n| a_1\geq a_2\geq\cdots\geq a_n\}.\]
\begin{thm*}[\ref{thm A}, \ref{thm B}]Suppose $b$ is simple and the Kisin variety $C_{\mu}(b)$ is non-empty, then $C_{\mu}(b)$ is geometrically connected if one of the following two conditions are satisfied:
\begin{enumerate}
\item $\mu=(\mu_{\tau})_{\tau}$ with $\mu_{\tau,1}\geq\mu_{\tau, 2}=\mu_{\tau, 3}=\cdots=\mu_{\tau, n}$ for all $\tau\in\mathrm{Hom}(k,\bar\FF_p)$;
\item $K$ is totally ramified and $n=3$ (i.e. $G=\GL_3$).
\end{enumerate}
\end{thm*}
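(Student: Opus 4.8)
The plan is to use the semi-module stratification of the Kisin variety $C_\mu(b)$, exactly as the introduction advertises. First I would fix a good representative $b$ in its $\sigma$-conjugacy class coming from Caruso's classification of simple $\varphi$-modules (Proposition \ref{simple}); simplicity of $\bar\rho$ guarantees this representative can be taken in a highly structured, essentially "superbasic" form, with $b$ cyclically permuting the standard basis up to powers of $u$. With such a $b$ in hand, one decomposes $C_\mu(b)(\bar\FF_p)$ into locally closed pieces indexed by combinatorial data (the "semi-modules," i.e. the relative position of the lattice $gG(\co_L)$ with respect to the $\varphi$-stable filtration induced by $b$). The key structural input is that each nonempty stratum is an affine space bundle over a point, hence irreducible, so the combinatorics of how strata meet in closures controls $\pi_0$. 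Proving geometric connectedness then reduces to showing that the incidence graph on the index set of nonempty strata is connected.

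\textbf{The two cases.} For case (1), where $\mu_\tau = (\mu_{\tau,1}, \mu_{\tau,2}, \dots, \mu_{\tau,2})$ has only one "jump" in each component, the Bruhat interval $\{\nu \in Y^+ : \nu \le \mu\}$ is linearly ordered (or nearly so), and the lattices $g$ satisfying $g^{-1}b\sigma(g) \in G(\co_L)u^\nu G(\co_L)$ differ from a fixed base lattice by a single "flag-type" datum in each embedding $\tau$. I would show that the semi-module index set is a product of intervals, and that adjacent strata always lie in a common closure, by writing an explicit one-parameter family $u \mapsto $ (a degeneration of the $\tau$-component flag) inside $C_\mu(b)$; this is essentially the $\PP^{n-1}$-being-connected phenomenon propagated over the embeddings. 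For case (2), $n = 3$ and $K$ totally ramified (so $G = \GL_3$, a single factor), the cocharacters $\mu$ are general triples $(\mu_1 \ge \mu_2 \ge \mu_3)$; here I would enumerate the possible semi-modules explicitly — there are finitely many up to the obvious symmetry — and verify connectedness of the incidence graph by hand, using the rank-$3$ structure to list all minimal/maximal strata and the chains joining them. The de Jong–Oort argument for superbasic $\GL_n$ affine Deligne–Lusztig varieties (\cite{dJO}) is the model, but one must redo it with Kisin's $\sigma$ rather than the Frobenius of an isocrystal.

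\textbf{The main obstacle.} The hard part will be establishing that the closure relations among semi-module strata are rich enough to connect the graph, because Kisin's $\sigma$ is not bijective: the usual affine-Deligne–Lusztig degeneration arguments (sliding lattices along $\GG_a$-orbits) must be checked to stay inside the Kisin condition $g^{-1}b\sigma(g) \in \bigcup_{\nu \le \mu} G(\co_L)u^\nu G(\co_L)$, and the failure of $\sigma$ to be an isomorphism can a priori break a path in the stratification. I expect to handle this by choosing the degenerations so that the relevant entries of $g^{-1}b\sigma(g)$ change only in a controlled way — e.g. by conjugating the standard $\GG_a$-actions through the cyclic structure of $b$ so that applying $\sigma$ merely shifts the $u$-adic valuation predictably — and by using the bound $\nu \le \mu$ to absorb the shift. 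A secondary technical point is the bookkeeping over the set $\mathrm{Hom}(k, \bar\FF_p)$ of embeddings in case (1): one must ensure the connecting families can be run simultaneously (or one $\tau$ at a time) without violating the determinant condition in the other factors, which I would arrange by a product/induction argument on $|\mathrm{Hom}(k,\bar\FF_p)|$. Once the incidence graph is shown connected, geometric connectedness of $C_\mu(b)$ follows since each stratum is geometrically irreducible.
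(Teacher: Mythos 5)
Your overall framework — pass to Caruso's representative, decompose via semi-module strata, show each stratum is connected, and then tie the strata together — is the same skeleton the paper uses, but the way the paper actually ties the strata together is quite different from what you propose, and your proposed route for case (1) has a real gap.

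For case (1), you propose to show that the incidence graph of nonempty strata (under closure relations) is connected, by producing explicit degenerations. The paper does not do this and, more importantly, it does not need to understand closure relations among strata at all. Instead, after reducing to the ``multi-copy'' situation $C_{\bmu}(b)$ with $\bmu$ minuscule (Lemma \ref{lemma_multi}, which lets you replace a general $\mu$ with a product of minuscule cocharacters on $G^d$), it proves two things: (i) every connected component of $C_{\bmu}(b)$ contains a zero-dimensional stratum, via a purely topological argument (pick a stratum in $C$ whose ambient Schubert cell $I^d u^{\bl}G^d(\co_L)/G^d(\co_L)$ has minimal dimension; if the stratum had positive dimension it would be a non-projective closed subvariety of an affine space, so its closure would escape into a lower Schubert cell, contradiction); and (ii) there is a \emph{unique} zero-dimensional stratum, by a careful combinatorial analysis using the operators $\varsigma$, $\d$, $h$ of Lemma \ref{delta}, together with the fact that the fixed point $e$ of $\tw\s$ has components avoiding $\ZZ$ (Proposition \ref{simple}). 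Together (i) and (ii) force exactly one connected component. This is substantially less fragile than trying to trace closure relations in the Kisin setting, and indeed it is exactly the closure relations (which are unknown here, unlike for affine Deligne--Lusztig semi-modules) that you flag as ``the main obstacle'' without a concrete way to overcome it. Your proposal also asserts that each nonempty stratum is irreducible; the paper only proves irreducibility of strata in the minuscule case (Proposition \ref{semi-module}(c)), which is why the reduction to the minuscule multi-copy case is a necessary step, not a convenience.

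For case (2) you are closer in spirit, since the paper really does connect points via explicit one-parameter families $g(x)=u^{\l}U_{\a}(u^{-1}x)$ and completes them to $\PP^1$'s; but the paper does not ``enumerate finitely many semi-modules by hand.'' The index set $S=\{\l : \l^{\natural}\le\mu\}$ is not bounded independently of $\mu$, and the argument is instead an induction (Claims 2 and 3 in the proof of Theorem \ref{thm B}) using the cyclic structure of the Weyl element, the relation $\a_1^{\vee}+w\a_1^{\vee}+w^2\a_1^{\vee}=0$, and a numerical estimate $\max[\mu]-\min[\mu]\ge p+1$ to rule out the bad case. So the underlying idea (explicit $\PP^1$-degenerations inside $C_\mu(b)$) is right, but the combinatorial bookkeeping you gesture at would need to be replaced by this inductive coroot-chaining argument to actually close the proof.
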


The first part of the theorem recovers the main result of \cite{H} when $n=2$.

We also give counterexamples to show that Kisin's conjecture does not hold in general (cf. \S \ref{counterexmaples}).

\bigskip
 As an application of this result, we obtain a connectedness result of deformation ring of $\bar\rho$ with conditions on Hodge-Tate weights. Suppose $\bar\rho: \Gamma_K\rightarrow \mathrm{GL}_n(\FF)$ is absolutely irreducible and flat.  Let $R^{fl}$ be the flat deformation ring of $\bar\rho$ in the sense of Ramakrishna (\cite{Ra}).
Consider a minuscule cocharacter \[\nu: \GG_{m, \bar{\QQ}_p}\rightarrow (\mathrm{Res}_{K|\QQ_p}T_n)_{\bar{\QQ}_p},\] where $T_n$ is the maximal torus of $\GL_n$ consisting of diagonal matrices.
Write $R^{fl, \nu}$ be the quotient of $R^{fl}$ corresponding to deformations of Hodge-Tate weights given by $\nu$.  To $\nu$, we can associate a cocharacter \[\mu(\nu): \GG_{m,\bar\FF_p}\rightarrow \mathrm{Res}_{k|\FF_p}(T_n)_{\bar\FF_p}.\]

\begin{cor*}[\ref{coro_application_deformation}]
The scheme $\mathrm{Spec}(R^{\mathrm{fl},\nu}[\frac{1}{p}])$ is connected if one of the following two conditions holds:
\begin{enumerate}
\item $\mu(\nu)=(\mu(\nu)_{\tau})_{\tau}$ with $\mu(\nu)_{\tau}=(\mu(\nu)_{\tau,1},\cdots,\mu(\nu)_{\tau,n})$ such that $\mu(\nu)_{\tau,1}\geq\mu(\nu)_{\tau,2}=\cdots=\mu(\nu)_{\tau,n}$ for all $\tau\in\mathrm{Hom}(k,\bar\FF_p)$;
\item $K$ is totally ramified and $n=3$.
\end{enumerate}
\end{cor*}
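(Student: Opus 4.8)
The plan is to derive the corollary from the group-theoretic result (Theorems~\ref{thm A} and~\ref{thm B}) together with Kisin's identification of $\pi_0$ of a Kisin variety with $\pi_0$ of the generic fibre of the corresponding flat deformation ring.

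Since $\bar\rho$ is flat, Ramakrishna's ring $R^{\mathrm{fl}}$ and its quotient $R^{\mathrm{fl},\nu}$ are defined, and by \cite{K09} there is a canonical bijection
\[
\pi_0\bigl(\mathrm{Spec}(R^{\mathrm{fl},\nu}[\tfrac{1}{p}])\bigr)\;\simeq\;\pi_0\bigl(C_{\mu(\nu)}(\bar\rho)\bigr),
\]
where $\mu(\nu)\in Y^+$ is the dominant cocharacter of $G=\mathrm{Res}_{k|\FF_p}(\mathrm{GL}_n)$ attached to the minuscule $\nu$ as in the statement, and $C_{\mu(\nu)}(\bar\rho)=C_{\mu(\nu)}(b)$ in the notation fixed above. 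Hence it suffices to show that $C_{\mu(\nu)}(b)$ is connected, or empty.

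Because $\bar\rho$ is absolutely irreducible, $\bar\rho(-1)_{|\Gamma_{K_\infty}}$ is again absolutely irreducible (\cite{B}), so the associated element $b\in G(\FF((u)))$ is simple in the sense of \S\ref{section_classification simple modules}. If $C_{\mu(\nu)}(b)=\emptyset$, then $R^{\mathrm{fl},\nu}[\tfrac{1}{p}]=0$ by the displayed bijection and there is nothing to prove. Otherwise condition~(1) (resp.\ (2)) of the corollary on $\mu(\nu)$ is precisely condition~(1) (resp.\ (2)) of the main theorem applied with $\mu=\mu(\nu)$, so $C_{\mu(\nu)}(b)$ is geometrically connected; in particular $\pi_0(C_{\mu(\nu)}(b))$ is a single point, hence so is $\pi_0(\mathrm{Spec}(R^{\mathrm{fl},\nu}[\tfrac{1}{p}]))$, i.e.\ $\mathrm{Spec}(R^{\mathrm{fl},\nu}[\tfrac{1}{p}])$ is connected.

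The only genuine work is a comparison of normalisations: one must recall exactly how $\mu(\nu)$ is produced from $\nu$ through $\mathrm{Hom}(k,\bar\FF_p)$ and the chosen compatible system $(\pi_n)$ of roots of $\pi$, check that it lies in $Y^+$, verify that the Hodge--Tate-weight (determinant) condition cutting out $R^{\mathrm{fl},\nu}$ from $R^{\mathrm{fl}}$ matches the condition $\nu'\le\mu(\nu)$ defining $C_{\mu(\nu)}(b)$ inside the affine Grassmannian, and confirm that Kisin's bijection is compatible with the reformulation $C_\mu(\bar\rho)=C_\mu(b)$ used in this paper. I expect this bookkeeping --- rather than any new geometric input --- to be the main point requiring care; it is, however, already available in \cite{K09} and in the earlier sections.
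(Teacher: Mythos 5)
Your argument reproduces the paper's proof: cite Kisin's identification of $\pi_0(\mathrm{Spec}(R^{\mathrm{fl},\nu}[\frac{1}{p}]))$ with $\pi_0(C_{\mu(\nu)}(b))$ (the paper points to \cite{K09} 2.4.10 specifically), observe that absolute irreducibility of $\bar\rho$ makes $b$ simple, and conclude by Theorems~\ref{thm A} and~\ref{thm B}. The ``normalisation bookkeeping'' you flag at the end is addressed in the paper by the explicit construction of $\mu(\nu)$ preceding the corollary and by deferring the deformation-theoretic compatibilities to Kisin's original reference, so there is no gap --- your handling of the empty case is in fact slightly more explicit than the paper's.
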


We now a give brief outline of the article. In Section \ref{section_semimodule}, we introduce the semi-module stratification which is the main tool for us to study the geometry of Kisin varieties. In Section \ref{section_classification simple modules}, by using Caruso's classification of simple $\varphi$-modules, we get a good representative $b$ in its $\sigma$-conjugacy class for simple elements. In section \ref{section_main}, we prove the main theorems about the connectedness of Kisin varieties and also give some counterexamples for Kisin's conjecture. In section \ref{section_application}, we apply the main theorem to get a connectedness result for deformation rings.

\bigskip
\textbf{Acknowledgments.}  We would like to thank Brandon Levin who encouraged us to work on this topic. We thank Hui Gao for useful discussions. We also thank Mark Kisin and Frank Calegari for their useful comments. The first author was partially supported by NSFC grant No.11671136 and STCSM grant No.18dz2271000. The second author was partially supported by NSFC (Nos.11922119, 11688101 and 11621061) and QYZDB-SSW-SYS007.

\section{Semi-module stratification on Kisin varieties}\label{section_semimodule}
In this section, suppose $p$ is a prime number (we allowed $p=2$).
\subsection{Kisin variety for arbitary reductive groups}
Recall that $L = \bar\FF_p ((u))$ and $\co_L = \bar\FF_p[[u]]$. Let $\varphi_L: L \to L$ be the homomorphism given by $\sum_i a_i u^i \mapsto \sum_i a_i u^{p i}$, where $a_i \in \bar\FF_p$.

Let $G$ be a reductive group over $\FF_p$. Fix $T \subseteq B \subseteq G$ with $T$ a maximal torus and $B$ a Borel subgroup of $G$. Let $\car = (X, \Phi, Y, \Phi^\vee)$ be the associated root datum, where $\Phi$ (resp. $\Phi^\vee$) is the set of roots (resp. coroots) of $G$; $X$ (resp. $Y$) is the character group (resp. cocharacter group) of $T$, together with a perfect pairing $\< , \>: X \times Y \to \ZZ$. Denote by $\Phi^+$ the set of positive roots appearing in $B$, and by $Y^+$ the corresponding set of dominant cocharacters. On $Y^+$ we have the Bruhat order denoted by $\leq$.

Let $G(L)$ be the group of $L$-points of $G$.  We have the Cartan decomposition on $G(L)$:

\[G(L)=\coprod_{\mu\in Y^+} G(\mathcal{O}_L)u^{\mu}G(\mathcal{O}_L).\] It's a stratification in the strict sense, i.e., for any $\mu\in Y^+$,
\[\overline{G(\mathcal{O}_L)u^{\mu}G(\mathcal{O}_L)}=\coprod_{\nu\in Y^+\atop \nu\leq \mu} G(\mathcal{O}_L)u^{\nu}G(\mathcal{O}_L).\]

Fix an automorphism $\sigma_0$ of $G$ that fixes $T$ and $B$. Denote by $\s$ the endomorphism of $G(L)$ induced from $\varphi$ and $\s_0$ as follows:
\[\s: G(L)\stackrel{\s_0}{\longrightarrow} G(L) \stackrel{\varphi_L}{\longrightarrow} G(L). \]

 Let $\FF$ be a finite extension of $\FF_p$.  Let $b \in G(\FF ((u)))$ and $\mu \in Y^+$ such that $\FF$ contains the reflex field of the conjugacy class of $\mu$.  Then the associated (closed) Kisin variety is defined over $\FF$ and with  $\bar\FF_p$-points given by $$C_\mu(b)(\bar\FF_p)  = \{g \in G(L); g\i b \s(g) \in \overline{G(\co_L) u^\mu G(\co_L)} \} / G(\co_L).$$

 In the literature, Kisin varieties were defined for the groups of the form $G=\mathrm{Res}_{k|\FF_p}H$ and the automorphism $\s_0$ is induced by the Frobenius relative to $k|\FF_p$, where $k$ is finite extension of $\FF_p$ and $H$ is a reductive group over $k$. (cf. \cite{PR}, \cite{EL}). In this case, the Kisin variety has the moduli interpretation that parametries some finite flat models with $H$-structure of a Galois representation with value in $H$ (\cite{Le}).

\subsection{Semi-module stratification of Kisin varieties}
In the rest of this section, we are interested in $G\times_{\FF_p} \bar\FF_p$ and $C_{\mu}(b)\times_{\FF}\bar\FF_p$ but not their rational structure.

Let $N_T \subseteq G$ be the normalizer of $T$. Denote by $W_0 = N_T(L) / T(L)$ the Weyl group, and denote by $$\tW = Y \rtimes W_0 = \{u^\t w; \l \in Y, w \in W_0\}$$ the Iwahori-Weyl group. There is a natural action of $\tW$ on the vector space $Y_\RR = Y \otimes \RR$. For $\tw = u^\t w \in \tW$ we set $\dot\tw = u^\t \dot w$ with $\dot w \in N_T(\bar\FF_p)$ some/any lift of $w$. As $\s$ and $\varphi_G$ preserve $T(L)$, we still denote by $\s$ and $\s_0$ the induced endomorphisms on $Y$ respectively. Notice that $\s_0$ is an automorphism of the root datum $\car$ and $\s = p \s_0$.

Let $I \subseteq G(L)$ be the Iwahori subgroup associated to the fundamental alcove $$\D = \{v \in Y_\RR; 0 < \<\a, v\> < 1, \forall \a \in \Phi^+\}.$$ Namely, $I$ is the preimage of $B^{-}(\bar\FF_p)$ under natural map $G(\mathcal{O}_L) \overset {u \mapsto 0} \longrightarrow G(\bar\FF_p)$, where $B^{-}$ is the opposite Borel subgroup of $B$.

 Denote by $I_\der = [I, I]$ the derived subgroup of $I$. For any $r\in\NN$, denote by $T(1+u^r\co_L)$ the image of $(1+u^r\co_L)^n\subset \GG_m^n(L)$ via an isomorphism of algebraic groups $\GG_m^n\simeq T_{|\bar\FF_p}$, where $n$ is the rank of $T$. It's easy to see  $T(1+u^r\co_L)$ does not depend on the choice of the isomorphism $\GG_m^n\simeq T_{|\bar\FF_p}$.
\begin{lem} \label{iota}
Let $b = \dot \tw$ with $\tw \in \tW$ such that the (unique) fixed point of $\tw\s$ lies in $\D$. Then the Lang's map $\Psi: h \mapsto h\i b\s(h) b\i$ restricts to a bijection $I_\der \cong I_\der$.
\end{lem}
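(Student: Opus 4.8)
The plan is to use a standard successive-approximation (Lang-type) argument adapted to the ``$\sigma = p\sigma_0$'' Frobenius of Kisin modules. Write $\Psi(h) = h\i b\s(h) b\i$. First I would reduce to a filtration statement: the Iwahori subgroup $I$ carries a descending filtration $I \supseteq I_1 \supseteq I_2 \supseteq \cdots$ by the congruence subgroups $I_r$ (elements congruent to $1$ modulo $u^r$ in a suitable sense), with $I_\der$ sitting between $I_1$ and the full $I$, and with $I_\der / I_{\der,r+1} \cong$ a finite-dimensional $\bar\FF_p$-vector space built from the affine root spaces $\fg_\alpha \cap u^{?}\co_L$. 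The key point is that conjugation by $b = \dot\tw$ permutes the affine root subgroups: if $\tw\s$ has its fixed point $v_0$ strictly inside the fundamental alcove $\D$, then for each affine root $(\alpha, m)$ the value $\<\alpha, v_0\> + m$ is \emph{strictly positive} whenever the corresponding affine root subgroup lies in $I_\der$, and $\s$ scales the relevant $u$-adic valuation by the factor $p > 1$. So on each graded piece the map induced by $h \mapsto h\i b\s(h)b\i$ is of the form $x \mapsto x - (\text{contraction})(x)$, where the contraction strictly increases $u$-adic depth; hence it is bijective on every quotient $I_\der / I_{\der, r}$, and the reasoning passes to the inverse limit.

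Concretely, the key steps, in order: (1) Normalize so that $b\s$ has fixed point in $\D$; record that for $\tw = u^\tau w$ this means $v_0 := (1 - w\s_0)\i(\tau)/p$-type formula places $v_0 \in \D$, and translate this into the positivity statement $\<\alpha, v_0\> + m > 0$ for all affine roots $(\alpha,m)$ occurring in $\Lie(I_\der)$. (2) Set up the congruence filtration of $I_\der$ and identify each graded quotient $\mathrm{gr}^r I_\der$ with a finite $\bar\FF_p$-vector space $V_r$, functorially in $r$. (3) Check that $\Psi$ preserves the filtration: $\Psi(I_{\der,r}) \subseteq I_{\der,r}$, which follows because conjugation by $\dot\tw$ moves depth by the additive shift coming from $(\alpha,m)\mapsto \<\alpha,v_0\>+m$ and $\s$ multiplies depth by $p$. (4) Compute the induced map on $\mathrm{gr}^r$: it is $\bar x \mapsto \bar x - \phi_r(\bar x)$ where $\phi_r \colon V_r \to V_r$ is \emph{nilpotent} (indeed $\phi_r$ factors through pieces of strictly larger depth once we track the root-by-root shift and the $p$-scaling together, using $p\<\alpha,v_0\> + (\text{stuff}) > \<\alpha,v_0\>$ precisely because $v_0 \in \D$). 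Hence $\mathrm{id} - \phi_r$ is bijective on each $V_r$. (5) Conclude: since $\Psi$ is bijective on every finite quotient $I_\der / I_{\der,r}$ and $I_\der$ is complete (= $\varprojlim_r I_\der/I_{\der,r}$), $\Psi\colon I_\der \to I_\der$ is a bijection. Surjectivity: given a target, solve modulo $u$, lift step by step; the strict increase of depth guarantees convergence. Injectivity: the difference of two solutions lies in every $I_{\der,r}$, hence is trivial.

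The main obstacle I anticipate is step (3)–(4): making precise, root subgroup by root subgroup, that conjugation by $\dot\tw$ together with $\s = p\s_0$ is genuinely \emph{contracting} on $\Lie(I_\der)$ and strictly increases depth on each graded piece. This requires carefully matching the combinatorics of which affine root subgroups generate $I_\der$ (as opposed to $I$ itself --- the torus part $T(\co_L^\times)$ and the length-zero part need the passage to the derived group, which is exactly why the statement is about $I_\der$ and not $I$) with the inequality $0 < \<\alpha,v_0\> < 1$ defining $\D$. One has to be slightly careful that the commutator relations in $I_\der$ do not spoil the filtration compatibility; this is handled by noting $[I_r, I_s] \subseteq I_{r+s}$ and $[I,I] \subseteq I_1$, so all correction terms land in strictly deeper layers. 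Everything else is the formal Lang's-lemma machinery, which goes through verbatim once the contraction property is in hand.
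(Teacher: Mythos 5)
Your proposal is correct and takes essentially the same route as the paper: show $\Psi$ preserves $I_\der$, filter $I_\der$, prove the twisted conjugation $h\mapsto b\s(h)b\i$ is contracting because the fixed point lies in $\D$ and $\s$ multiplies depth by $p$, and conclude bijectivity by successive approximation. The one refinement the paper makes that you leave vague is to use the Moy--Prasad filtration \emph{based at the fixed point $e$} (rather than the plain $u$-adic congruence filtration), which makes the contraction exact --- $b\s(I_{\der,r})b\i\subseteq I_{\der,pr}$ --- and renders the root-by-root bookkeeping in your steps (3)--(4) a one-line computation.
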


\begin{proof} The first condition implies that $b\sigma (I)b^{-1}\subseteq I$ and hence $b\sigma (I_{\der})b^{-1}\subseteq I_{\der}$. It follows that $\Psi(I_\der)\subseteq I_\der$. To show $\Psi$ is a bijection, it suffices to show the action $h \mapsto b \s(h) b\i$ is topologically unipotent on $I_\der$. Let $e \in \D$ be the fixed point of $\tw\s$. For $r \in \RR_{> 0}$ we denote by $I_{\der, r}$ the Moy-Prasad subgroup generated by $T(1 + u^{\lceil r \rceil} \co_L)$ and $U_\a(u^k \co_L)$ such that $-\<\a, e\> + k \ge r$. It suffices to show $b \s(I_{\der, r}) b\i \subseteq I_{\der, p r}$. Indeed, write $\tw = u^\t w$ with $\t \in Y$ and $w \in W_0$. Then $e =w\s(e) + \t = pw\s_0(e) + \t$. One computes that \begin{align*} b \s(T(1 + u^{\lceil r \rceil}\co_L)) b\i &= T(1 + u^{p \lceil r \rceil}\co_L) \subseteq T(1 + u^{\lceil pr \rceil}\co_L); \\ b \s(U_\a(u^k \co_L)) b\i &= U_{w\s_0(\a)}(u^{pk + \<w\s_0(\a), \t\>} \co_L) \subseteq I_{\der, pr}, \end{align*} where the second inclusion follows from that \begin{align*}  -\<w\s_0(\a), e\> + pk + \<w\s_0(\a), \t\> &= -\<w\s_0(\a), pw\s_0(e) + \t\> + pk + \<w\s_0(\a), \t\> \\  &= p(-\<\a, e\> + k) \geq pr.\end{align*} This finishes the proof.
\end{proof}

For $\l \in Y$, let $U_\l^+$ (resp. $U_\l^-$) be the maximal unipotent subgroup of $G$ generated by $U_\a$ such that $\l_\a \ge 0$ (resp. $\l_\a < 0$), where
\[ \l_\a =\begin{cases} \<\l, \a\>,  &\text{ if } \a\in \Phi^-;\\ \<\l, \a\> - 1, &\text{ if } \a\in\Phi^+. \end{cases} \]
Note that $U_\l^+$ and $U_\l^-$ are opposite to each other. If there is no confusion, we also write $U_\l^+$ (resp. $U_\l^{-}$) for $U_\l^+(L)$ (resp. $U_\l^{-}(L)$).

Consider the following semi-module decomposition $$C_\mu(b)(\bar\FF_p) = \sqcup_{\l \in Y} C_\mu^\l(b)(\bar\FF_p),$$ where each piece $C_{\mu}^\l(b)$ is locally closed subscheme of $C_{\mu}(b)\times_{\FF}\bar\FF_p$ with $\bar\FF_p$-points $$C_\mu^\l(b)(\bar\FF_p) = (I u^\l G(\co_L)/G(\co_L)) \cap C_\mu(b)(\bar\FF_p).$$

\begin{prop}\label{semi-module} Let $b = \dot\tw$, where $\tw = u^\t w \in \tW$ is as in Lemma \ref{iota} for some $\t \in Y$ and $w \in W_0$. The following conditions are equivalent:
\begin{enumerate}
\item $C_\mu^\l(b)$ is non-empty;
\item $u^{\lambda}\in C_{\mu}(b)(\bar\FF_p)$;
\item $\l^\natural: =-\lambda+\t+w\sigma(\lambda)\leq \mu$.
\end{enumerate}
Moreover, under these equivalent conditions we have

(a) $C_{\mu}^\l(b)$ is connected;

(b) $C_\mu(b) = \{u^\l\}$ if and only if \[I \cap U_\l^+ \cap u^\l \overline{G(\co_L) u^\mu G(\co_L)} u^{-\l^\dag} = u^\l U_\l^+(\co_L) u^{-\l},\] where $\l^\dag = \t + w\s(\l);$

(c) $C_{\mu}^\l(b)$ is irreducible of dimension $|R(\l)|$ if $\mu$ is minuscule, where $R(\l) = \{\a \in \Phi; \l_\a \ge 1, \<\a, \l^\natural\> = -1\}$.
\end{prop}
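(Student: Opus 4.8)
The plan is to analyze the coset $I u^\l G(\co_L)/G(\co_L)$ and intersect it with the Kisin condition, reducing everything to an affine-space computation via the Iwahori factorization. First I would write an arbitrary element of $I u^\l G(\co_L)/G(\co_L)$ as $h u^\l G(\co_L)$ with $h \in I$, and use the Iwahori factorization $I = (I \cap U_\l^-)(I \cap T(\co_L))(I \cap U_\l^+)$ together with Lemma \ref{iota}: since $b = \dot\tw$ with the fixed point of $\tw\s$ in $\D$, Lang's map $\Psi: h \mapsto h\i b\s(h)b\i$ is a bijection $I_\der \cong I_\der$. This lets me normalize $h$ to lie in $u^\l U_\l^+(\co_L) u^{-\l}$ modulo the $G(\co_L)$-action on the right and the Lang twist on the left, so that the coset is represented by $g = n u^\l$ with $n \in U_\l^+$ ranging over $U_\l^+(L)/(u^\l U_\l^+(\co_L) u^{-\l})$, which is an affine space. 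The Kisin condition $g\i b \s(g) \in \overline{G(\co_L) u^\mu G(\co_L)}$ then becomes $u^{-\l} n\i b \s(n) \s(u^\l) \in \overline{G(\co_L) u^\mu G(\co_L)}$, i.e. (after moving $b = \dot\tw = u^\t \dot w$ across and using $\s(u^\l) = u^{p\s_0(\l)}$, $\dot w u^{p\s_0(\l)} = u^{w\s(\l)}\dot w$) a condition of the shape $u^{-\l}\, n\i\, u^{\l^\dag}\, (\dot w \s(n)\dot w\i)\, \dot w \in \overline{G(\co_L)u^\mu G(\co_L)}$ with $\l^\dag = \t + w\s(\l)$; note $-\l + \l^\dag = \l^\natural$.

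For the equivalences (1)$\Leftrightarrow$(2)$\Leftrightarrow$(3): taking $n = 1$ (i.e. the point $u^\l$) the condition reduces to $u^{\l^\natural}\dot w \in \overline{G(\co_L)u^\mu G(\co_L)}$, which by the Cartan decomposition and its closure relation (stated in the excerpt) holds iff $\l^\natural \le \mu$ — here one uses that the dominance representative of $u^{\l^\natural}\dot w$ in the Cartan stratification is the dominant conjugate of $\l^\natural$, and $\l^\natural \le \mu$ in the Bruhat order on $Y^+$ is the right statement (this needs $\l^\natural$ to be compared after taking the dominant representative, a routine point). This gives (2)$\Rightarrow$(1) trivially and (3)$\Rightarrow$(2); for (1)$\Rightarrow$(3) one observes that the Kisin condition for any $n$ forces, by looking at the "lowest" term (valuations / leading behaviour of the matrix entries, or a standard Iwahori-double-coset degeneration argument), the same inequality $\l^\natural \le \mu$, since specializing $n \to 1$ lands in the closure of the stratum and closures only decrease the Cartan invariant.

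For (a), connectedness: having presented $C_\mu^\l(b)(\bar\FF_p)$ as a locally closed subset of the affine space $U_\l^+(L)/(u^\l U_\l^+(\co_L)u^{-\l}) \cong \AA^N$ cut out by the closed Kisin condition, I would exhibit a $\GG_m$-action (rescaling $u$, i.e. the cocharacter-conjugation action $t \mapsto \mathrm{Ad}(t^{\rho^\vee})$ or a suitable dilation on the coordinates of $n$) that preserves $C_\mu^\l(b)$ and contracts it to the single point $u^\l$; since the condition is closed and the fixed point $u^\l$ lies in the stratum (by (2), given (3)), every point flows to $u^\l$, forcing connectedness. Concretely the grading on $U_\l^+(L)/u^\l U_\l^+(\co_L)u^{-\l}$ coming from powers of $u$ is positive, so the naive $\GG_m$ acting by $u \mapsto t u$ already contracts the ambient affine space to the origin $u^\l$ and one only needs that this action stabilizes the Kisin locus, which is clear because the Cartan decomposition is $\GG_m$-stable. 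Statement (b) is then immediate from the above presentation: $C_\mu(b) = \{u^\l\}$ exactly when the affine space has only its origin satisfying the condition, which unwinds to the displayed equality $I \cap U_\l^+ \cap u^\l \overline{G(\co_L)u^\mu G(\co_L)} u^{-\l^\dag} = u^\l U_\l^+(\co_L)u^{-\l}$. Finally (c), the dimension count when $\mu$ is minuscule: in the minuscule case the Kisin condition $u^{-\l} n\i u^{\l^\dag}(\dot w \s(n)\dot w\i)\dot w \in \overline{G(\co_L)u^\mu G(\co_L)}$ becomes a system of equations that is "linearized" — for minuscule $\mu$ the double coset closure is just the union over $\nu \le \mu$ and the leading-order term of $n\i b\s(n)b\i$ in each root direction $\a$ with $\l_\a \ge 1$ must satisfy the single scalar vanishing condition coming from $\<\a,\l^\natural\> = -1$, cutting the relevant $\AA^1$ factors while leaving the coordinates in $R(\l)$ free; counting the free coordinates gives $|R(\l)|$ and irreducibility follows since it is an open subvariety of that affine space (here I'd cite de Jong–Oort-type arguments and the explicit minuscule Cartan description).

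The main obstacle I expect is the implication (1)$\Rightarrow$(3) together with the precise bookkeeping in (c): one must control, uniformly in $n \in U_\l^+(L)$, how the Cartan invariant of $u^{-\l}n\i b\s(n)b\i u^\l$ behaves, and in particular show it can never jump below $\l^\natural$ while for minuscule $\mu$ identifying exactly which root coordinates are constrained. This requires a careful valuation/Bruhat-order argument on Iwahori double cosets — essentially a "Newton-polygon goes up under specialization, the minimal stratum is the one through $u^\l$" statement — rather than the softer contraction argument used for connectedness. Everything else is the Iwahori-factorization normalization (Lemma \ref{iota}) plus the $\GG_m$-contraction, which are routine.
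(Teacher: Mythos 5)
Your overall strategy coincides with the paper's: normalize via Lemma \ref{iota} (Lang's map on $I_\der$), use the Iwasawa factorization $I_\der = (I\cap U_\l^-)T(1+u\co_L)(I\cap U_\l^+)$ to reduce to the $U_\l^+$-part, and then deduce connectedness of the stratum by a contracting $\GG_m$-action. However, there is a genuine gap in the contraction you propose. Your ``naive $\GG_m$ acting by $u\mapsto tu$'' does not contract the relevant affine space to the base point: the space sits in $(I\cap U_\l^+)/(I_{\der,n}\cap U_\l^+)$, and for a negative root $\a$ with $\l_\a=0$ (which does occur in $U_\l^+$ when $\l$ is not regular) one has $I\cap U_\a=U_\a(\co_L)$, so the $u^0$-coefficient in $\bar\FF_p$ is a genuine coordinate that the loop rotation $u\mapsto tu$ leaves fixed. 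Your alternative $\mathrm{Ad}(t^{\rho^\vee})$ is also wrong for the same reason: $\<\a,\rho^\vee\><0$ for $\a<0$, so it dilates those coordinates in the wrong direction. The paper instead conjugates by $z^\xi$ where $\xi$ is a \emph{regular dominant cocharacter with respect to $U_\l^+$} (so $\<\a,\xi\>>0$ for every $\a$ appearing in $U_\l^+$, positive or negative); since $z^\xi\in T(\bar\FF_p)\subseteq G(\co_L)$ commutes with $u^\l$ and $u^{\l^\dag}$ and normalizes $G(\co_L)$, this action preserves $(I\cap U_\l^+)\cap u^\l\overline{G(\co_L)u^\mu G(\co_L)}u^{-\l^\dag}$ and contracts it to the identity as $z\to 0$. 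This is the step you would need to replace.

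A secondary issue is (1)$\Rightarrow$(3): your ``specialize $n\to1$ and use semicontinuity of the Cartan invariant'' argument implicitly relies on the very contraction whose existence you are trying to establish, and is in any case vaguer than needed. The paper derives it directly from the factorization identity (\ref{eqn_Ider}): nonemptiness of the stratum forces $(u^{-\l}(I\cap U_\l^+)u^{\l^\dag})\cap\overline{G(\co_L)u^\mu G(\co_L)}\neq\emptyset$, hence $U_\l^+\,u^{\l^\natural}\cap\overline{G(\co_L)u^\mu G(\co_L)}\neq\emptyset$, and the standard Iwasawa--Cartan comparison then gives $\l^\natural\le\mu$ without appealing to any degeneration in the $n$-variable. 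Your sketch of (b) and (c) is in the right spirit, but in (c) the paper carries out an explicit product decomposition of $(I\cap U_\l^+)\cap u^\l\overline{G(\co_L)u^\mu G(\co_L)}u^{-\l^\dag}$ as $(u^\l U_\l^+(\co_L)u^{-\l})\prod_{\a\in R(\l)}U_\a(u^{\<\a,\l\>-1}\bar\FF_p)$, using that the root groups $U_\a$ for $\a\in D$ pairwise commute in the minuscule case; this identifies both the dimension and the irreducibility at once, which your ``linearization'' heuristic does not pin down.
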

\begin{proof}We first show the three conditions are equivalent. Note that $u^{-\l}b\sigma(u^\l)=u^{\l^\natural}$. It follows the implications (3) $\Rightarrow$ (2) $\Rightarrow$ (1). It remains to show the implication (1) $\Rightarrow$ (3). Let $\iota: I_\der \to I_\der$ be the endomorphism of $I_\der$ given by $h \mapsto b \sigma(h) b\i$. Let $\Psi: I_\der \to I_\der$ be the Lang's map given by $h \mapsto h\i \iota(h)$. By Lemma \ref{iota}, $\Psi$ is an isomorphism whose inverse is given by $h \mapsto \cdots \iota^2(h)\i  \iota(h)\i h\i$. Denote by $\pi_\l: I_\der \to Iu^\l G(\co_L)/G(\co_L)$ the surjective map given by $h \mapsto h u^\l G(\co_L)$. Then one computes that $$\pi\i_\l(C_{\mu}^\l(b)(\bar\FF_p)) = \Psi\i(I_\der \cap u^\l \overline{G(\co_L) u^\mu G(\co_L)} u^{-\l^\dag}),$$ with $\l^\dag = \t + w\sigma(\l)$. By the Iwasawa decomposition, we have $$I_\der = (I \cap U_\l^-) T(1+u\co_L) (I \cap U_\l^+).$$

Noticing that $u^{-\l} (I \cap U_\l^-) T(1+u\co_L) u^\l \subseteq I$, we deduce that \begin{eqnarray} \label{eqn_Ider} &I_\der \cap u^\l \overline{G(\co_L) u^\mu G(\co_L)} u^{-\l^\dag}
\\ \nonumber= &(I \cap U_\l^-) T(1+u\co_L) \ ((I \cap U_\l^+) \cap u^\l \overline{G(\co_L) u^\mu G(\co_L)} u^{-\l^\dag}).\end{eqnarray} Suppose $C_{\mu}^\l(b)(\bar\FF_p)$ is non-empty, that is, $I_\der \cap u^\l \overline{G(\co_L) u^\mu G(\co_L)} u^{-\l^\dag} \neq \emptyset$. Thus $$\emptyset \neq (u^{-\l} (I \cap U_\l^+) u^{\l^\dag}) \cap \overline{G(\co_L) u^\mu G(\co_L)}  \subseteq U_\l^+ u^{\l^\natural} \cap \overline{G(\co_L) u^\mu G(\co_L)},$$ which means that $\l^\natural \leq \mu$ and the implication (1) $\Rightarrow$ (3) is proved.

\

Now suppose that the three equivalent conditions are satisfied. Choose $n \gg 0$ such that both $I_\der \cap u^\l \overline{G(\co_L) u^\mu G(\co_L)} u^{-\l^\dag}$ and $I_\der \cap u^\l G(\co_L) u^{-\l}$ are invariant under the left multiplication by $I_{\der, n}$, see the proof of Lemma \ref{iota}. Then $\Psi$ induces an automorphism $$\Psi_n: I_\der / I_{\der, n} \overset \sim \to I_\der / I_{\der, n}.$$ Now we have \begin{align*}C_{\mu}^\l(b)(\bar\FF_p) &= \pi_\l \circ \Psi\i (I_\der \cap u^\l \overline{G(\co_L) u^\mu G(\co_L)} u^{-\l^\dag}) \\ &\cong \Psi\i(I_\der \cap u^\l \overline{G(\co_L) u^\mu G(\co_L)} u^{-\l^\dag}) / I_\der \cap u^\l G(\co_L) u^{-\l} \\ &\cong \Psi_n\i(I_\der \cap u^\l \overline{G(\co_L) u^\mu G(\co_L)} u^{-\l^\dag} / I_{\der, n}) /  ((I_\der \cap u^\l G(\co_L) u^{-\l}) / I_{\der, n}). \end{align*}

As $\Psi_n$ is an isomorphism, to show $C_\mu^\l(b)$ is connected, it suffices to show $$I_\der \cap u^\l \overline{G(\co_L) u^\mu G(\co_L)} u^{-\l^\dag} / I_{\der, n}$$ is connected. In view of (\ref{eqn_Ider}) and that $(I \cap U_\l^-) T(1+u\co_L)$ is connected, this is equivalent to show $$(I \cap U_\l^+) \cap u^\l \overline{G(\co_L) u^\mu G(\co_L)} u^{-\l^\dag} / I_{\der, n} \cap U_\l^+$$ is connected. Let $\xi$ be a regular dominant cocharacter with respect to $U_\l^+$. Then the map $z \mapsto z^\xi h z^{-\xi}$ defines an affine line connecting an arbitrary point $h \in (I \cap U_\l^+) \cap u^\l \overline{G(\co_L) u^\mu G(\co_L)} u^{-\l^\dag} / I_{\der, n} \cap U_\l^+$ and the identity $1$. This finishes the proof of (a).

\

One computes that \begin{align*}\dim C_\mu^\l(b) &= \dim (I_\der \cap u^\l \overline{G(\co_L) u^\mu G(\co_L)} u^{-\l^\dag} )/ I_{\der, n} - \dim (I_\der \cap u^\l G(\co_L) u^{-\l}) / I_{\der, n} \\ &=\dim (I\cap U_\l^-) T(1 + u\co_L) ((I \cap U_\l^+) \cap u^\l \overline{G(\co_L) u^\mu G(\co_L)} u^{-\l^\dag}) / I_{\der, n} \\ &\quad - \dim (I\cap U_\l^-) T(1 + u\co_L) (u^\l U_\l^+(\co_L) u^{-\l}) / I_{\der, n} \\ &= \dim (I \cap U_\l^+) \cap u^\l \overline{G(\co_L) u^\mu G(\co_L)} u^{-\l^\dag}) / I_{\der, n} \cap U_\l^+ \\ &\quad - \dim (u^\l U_\l^+(\co_L) u^{-\l}) / I_{\der, n} \cap U_\l^+, \end{align*} where the second equality follows from (\ref{eqn_Ider}) and  \[I_\der \cap u^\l G(\co_L) u^{-\l} = (I\cap U_\l^-) T(1 + u\co_L) (u^\l U_\l^+(\co_L) u^{-\l}).\] By (a), $C_\mu^\l(b) = \{u^\l\}$ if and only if $\dim C_\mu^\l(b) = 0$, that is, \[I \cap U_\l^+ \cap u^\l \overline{G(\co_L) u^\mu G(\co_L)} u^{-\l^\dag} = u^\l U_\l^+(\co_L) u^{-\l}\] since $$u^\l U_\l^+(\co_L) u^{-\l} \subseteq I \cap U_\l^+ \cap u^\l \overline{G(\co_L) u^\mu G(\co_L)} u^{-\l^\dag}$$ with both sides connected and are invariant under left multiplication by $u^\l U_\l^+(\co_L) u^{-\l}$. Thus (b) is proved.

\

Suppose $\mu$ is minuscule, then $\l^\natural$ is conjugate to $\mu$ by (3). In particular, $$U_\l^+ u^{\lambda^\natural} \cap G(\co_L) u^\mu G(\co_L) = U_\l^+(\co_L) u^{\lambda^\natural} U_\l^+(\co_L) = U_\l^+(\co_L) (\prod_{\a \in D} U_\a(t\i \co_L)) u^{\lambda^\natural},$$ where $D = \{\a \in \Phi; \l_\a \ge 0, \<\a, \l^\natural\> = -1\}$. As $\l^\natural$ is minuscule, all the root subgroups $U_\a$ for $\a \in D$ commute with each other. Therefore, \begin{align*} \tag{*} &\quad\ (I \cap U_\l^+) \cap u^\l \overline{G(\co_L) u^\mu G(\co_L)} u^{-\l^\dag} \\ &=(I \cap U_\l^+) \cap u^\l G(\co_L) u^\mu G(\co_L) u^{-\l^\dag} \\ &= u^\l (u^{-\l} (I \cap U_\l^+) u^\l u^{\lambda^\natural} \cap G(\co_L) u^\mu G(\co_L)) u^{-\l^\dag} \\  &= u^\l ((u^{-\l} (I \cap U_\l^+) u^\l u^{\lambda^\natural}) \cap (U_\l^+(\co_L) (\prod_{\a \in D} U_\a(t\i \co_L)) u^{\lambda^\natural})) u^{-\l^\dag} \\ &= I \cap ((u^\l U_\l^+(\co_L) u^{-\l}) \prod_{\a \in D} U_\a(u^{\<\a, \l\> - 1} \co_L) \\ &= (u^\l U_\l^+(\co_L) u^{-\l}) (I \cap  (\prod_{\a \in D} U_\a(u^{\<\a, \l\> - 1} \co_L)) \\ &= (u^\l U_\l^+(\co_L) u^{-\l}) \prod_{\a \in D} I \cap U_\a(u^{\<\a, \l\> - 1} \co_L) \\ &=  (u^\l U_\l^+(\co_L) u^{-\l}) \prod_{\a \in D \setminus R(\l)} U_\a(u^{\<\a, \l\>} \co_L) \prod_{\a \in R(\l)} U_\a(u^{\<\a, \l\> - 1} \co_L) \\ &=(u^\l U_\l^+(\co_L) u^{-\l})  \prod_{\a \in R(\l)} U_\a(u^{\<\a, \l\> - 1} \overline \FF_p), \end{align*}where the fifth equality follows from that $u^\l U_\l^+(\co_L) u^{-\l} \subseteq I$; the sixth one follows from that $I \cap U_\l^+ = \prod_{\a \in \Phi, \l_\a \ge 0}(I \cap U_\a)$; the last but one equality follows from that \[I \cap U_\a(u^{\<\l, \a\> - 1}\co_L)=\begin{cases} U_\a(u^{\<\l, \a\> - 1}\co_L), &\text{ if }\l_\a \ge 1;\\ U_\a(u^{\<\l, \a\>}\co_L), &\text{ if }\l_\a = 0.\end{cases}\] In view of \ref{eqn_Ider} and (*), $I_\der \cap u^\l \overline{G(\co_L) u^\mu G(\co_L)} u^{-\l^\dag} / I_{\der, n}$ is irreducible. Hence $C_\mu^\l(b)$ is irreducible with dimension \begin{align*} \dim C_\mu^\l(b) &=  \dim (I \cap U_\l^+) \cap u^\l \overline{G(\co_L) u^\mu G(\co_L)} u^{-\l^\dag}) / I_{\der, n} \cap U_\l^+ \\ &\quad - \dim (u^\l U_\l^+(\co_L) u^{-\l}) / I_{\der, n} \cap U_\l^+ \\ &= |R(\l)|.\end{align*} This finishes the proof of (c).
\end{proof}

\subsection{Multi-copy case}\label{subsection_muticopy}

Let $G^d$ be the product of $d$ copies of $G$. For $b \in G(L)$ and $\bmu \in Y^d$ we define $C_{\bmu}(b)$ over $\bar\FF_p$ with $\bar\FF_p$-points: $$C_{\bmu}(b)(\bar\FF_p) = \{\bg \in G^d(L); \bg\i \bb \bs(\bg) \in \overline{G^d(\co_L) u^\bmu G^d(\co_L)}\} / G^d(\co_L),$$ where $\bb = (1, \dots, 1, b) \in G^d(L)$ and $\bs: G^d(L) \to G^d(L)$ is the endomorphism given by $(g_1, g_2, \dots, g_d) \mapsto (g_2, \dots, g_d, \s(g_1))$. We still denote by $\bs$ the induced linear map $Y_\RR^d \to Y_\RR^d$ given by $(v_1, v_2, \dots, v_d) \mapsto (v_2, \dots, v_d, \s(v_1))$.
\begin{lem}\label{lemma_multi}
Let $\bmu = (\mu_1, \dots, \mu_d)$ be a dominant cocharacter and let $\mu = \mu_1 + \cdots + \mu_d$. Then the projection $G^d(L) \to G(L)$ to the first factor induces a surjection $C_{\bmu}(b)(\bar\FF_p) \twoheadrightarrow C_\mu(b)(\bar\FF_p)$.
\end{lem}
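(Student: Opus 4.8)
The strategy I would take is to unwind the definitions until the statement reduces to the standard identity $S_{\mu_1}S_{\mu_2}\cdots S_{\mu_d}=S_\mu$, where $\mu=\mu_1+\cdots+\mu_d$ and, for a dominant cocharacter $\nu$, I write $S_\nu:=\overline{G(\co_L)u^\nu G(\co_L)}$. First I would spell out the defining condition: for $\bg=(g_1,\dots,g_d)\in G^d(L)$ one has $\bs(\bg)=(g_2,\dots,g_d,\s(g_1))$, hence
\[\bg\i\bb\bs(\bg)=\bigl(g_1\i g_2,\ g_2\i g_3,\ \dots,\ g_{d-1}\i g_d,\ g_d\i b\s(g_1)\bigr).\]
Since $G^d(\co_L)=\prod_iG(\co_L)$ and $u^\bmu=(u^{\mu_1},\dots,u^{\mu_d})$, the Cartan decomposition and the Bruhat order of $G^d$ are the product ones, so $\overline{G^d(\co_L)u^\bmu G^d(\co_L)}=\prod_{i=1}^dS_{\mu_i}$; thus $\bg\in C_\bmu(b)(\bar\FF_p)$ precisely when $g_i\i g_{i+1}\in S_{\mu_i}$ for $1\le i\le d-1$ and $g_d\i b\s(g_1)\in S_{\mu_d}$. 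I would also record that the first-factor projection $G^d(L)\to G(L)$ descends to $G^d(L)/G^d(\co_L)\to G(L)/G(\co_L)$ and that these conditions do not depend on the chosen representative of $\bg G^d(\co_L)$, since $\s_0$ and $\varphi_L$ preserve $G(\co_L)$ and each $S_\nu$ is $G(\co_L)$-bi-invariant.

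Next, granting the identity $S_{\mu_1}\cdots S_{\mu_d}=S_\mu$, the lemma is formal. For $\bg\in C_\bmu(b)(\bar\FF_p)$, the telescoping identity
\[g_1\i b\s(g_1)=(g_1\i g_2)(g_2\i g_3)\cdots(g_{d-1}\i g_d)(g_d\i b\s(g_1))\]
shows $g_1\i b\s(g_1)\in S_{\mu_1}\cdots S_{\mu_d}=S_\mu$, so $g_1\in C_\mu(b)(\bar\FF_p)$; hence the projection carries $C_\bmu(b)(\bar\FF_p)$ into $C_\mu(b)(\bar\FF_p)$. For surjectivity, given $g_1\in G(L)$ with $x:=g_1\i b\s(g_1)\in S_\mu$, I would choose a factorization $x=x_1x_2\cdots x_d$ with $x_i\in S_{\mu_i}$ and set $g_{i+1}:=g_ix_i$ for $1\le i\le d-1$; then $g_i\i g_{i+1}=x_i\in S_{\mu_i}$ and $g_d\i b\s(g_1)=x_{d-1}\i\cdots x_1\i\cdot x_1x_2\cdots x_d=x_d\in S_{\mu_d}$, so $\bg=(g_1,\dots,g_d)\in C_\bmu(b)(\bar\FF_p)$ lies above $g_1$.

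The one substantive point, which I expect to be the main obstacle, is the identity $S_{\mu_1}\cdots S_{\mu_d}=S_\mu$ itself (it is classical, so in a final write-up I would cite it rather than reprove it); by induction it reduces to the case $d=2$. For dominant $\nu_1,\nu_2$, the inclusion $S_{\nu_1}S_{\nu_2}\subseteq S_{\nu_1+\nu_2}$ follows from the standard fact $G(\co_L)u^{\lambda_1}G(\co_L)\cdot G(\co_L)u^{\lambda_2}G(\co_L)\subseteq\bigcup_{\lambda\le\lambda_1+\lambda_2}G(\co_L)u^\lambda G(\co_L)$ together with $\lambda_1+\lambda_2\le\nu_1+\nu_2$ whenever $\lambda_i\le\nu_i$. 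For the reverse inclusion I would argue geometrically inside the affine Grassmannian $G(L)/G(\co_L)$: the subset $Z:=\{x_1x_2G(\co_L);\ x_1\in S_{\nu_1},\ x_2\in S_{\nu_2}\}$ is the image of the proper convolution morphism on the twisted product of the Schubert varieties $S_{\nu_1}/G(\co_L)$ and $S_{\nu_2}/G(\co_L)$, hence closed and irreducible; it lies in $S_{\nu_1+\nu_2}/G(\co_L)$ by the first inclusion, is left $G(\co_L)$-stable, and contains $u^{\nu_1+\nu_2}G(\co_L)$, so it contains the dense orbit $G(\co_L)u^{\nu_1+\nu_2}G(\co_L)/G(\co_L)$; being closed, $Z$ equals $S_{\nu_1+\nu_2}/G(\co_L)$. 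Taking $\bar\FF_p$-points and absorbing the $G(\co_L)$-ambiguity into the right factor of $x_2$ yields $S_{\nu_1+\nu_2}\subseteq S_{\nu_1}S_{\nu_2}$, which completes the plan.
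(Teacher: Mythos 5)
Your proof is correct and follows essentially the same route as the paper's: the key ingredient is the identity $\overline{G(\co_L)u^{\mu_1}G(\co_L)}\cdots\overline{G(\co_L)u^{\mu_d}G(\co_L)}=\overline{G(\co_L)u^{\mu}G(\co_L)}$, which the paper simply cites to Bruhat--Tits (4.4.4) while you derive it by a standard convolution-morphism argument. The telescoping/factorization argument you spell out is exactly the ``direct computation'' the paper leaves implicit.
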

\begin{proof}Note that
\[\overline{G(\co_L)u^{\mu_1}G(\co_L)}\cdot \overline{G(\co_L)u^{\mu_2}G(\co_L)}\cdots \overline{G(\co_L)u^{\mu_d}G(\co_L)}=\overline{G(\co_L)u^{\mu}G(\co_L)} \] by \cite{BT} 4.4.4. The lemma follows by direct computation.
\end{proof}

Suppose $b = \dot \tw$ for some $\tw = u^\t w \in \tW$. Set $\btw = (1, \dots, 1, \tw) = u^{\bt} \bw$ with $\bt = (0, \dots, 0, \t)$ and $\bw = (1, \dots, 1, w)$. Let $\bl \in Y^d$. Define \begin{gather*}\l_\bullet^\natural = -\bl + \bt + \bw \bs(\bl); \\ R(\bl) = \coprod_{i=1}^d\{\a \in \Phi; (\l_i)_\a \ge 1, \<\a, (\l_\bullet^\natural)_i\> = -1\}.\end{gather*} Moreover, we set $C_\bmu^\bl(\bb)(\bar\FF_p)=  I^d u^\bl G^d(\co_L) / G^d(\co_L) \cap C_\bmu(b)(\bar\FF_p)$.
\begin{prop} \label{dim}
Suppose $\bmu$ is minuscule and $b$ is as in Lemma \ref{iota}. Then $C_\bmu^\bl(b) \neq \emptyset$ if and only if $\l_\bullet^\natural$ is conjugate to $\bmu$. Moreover, in this case, $C_\bmu^\bl(\bb)$ is irreducible of dimension $|R(\bl)|$.
\end{prop}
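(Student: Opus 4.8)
This is the multi-copy counterpart of Lemma \ref{iota} and Proposition \ref{semi-module}, and the plan is to check that the proofs of those two results carry over to $G^d$ equipped with the twisted endomorphism $\bs$; the non-emptiness criterion and the dimension formula then follow by bookkeeping of the contributions of the $d$ factors. The one point not formally covered by \S\ref{section_semimodule} is that $\bs$ is not of the form $\s_0\circ\varphi_L$ for an automorphism $\s_0$ of $G^d$---it cyclically permutes the factors and twists only the last one---so Proposition \ref{semi-module} cannot be invoked as a black box. I expect the only genuine point to be the re-derivation of the contraction estimate underlying Lemma \ref{iota} in this setting.

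First I would establish the analogue of Lemma \ref{iota} for $\bb=(1,\dots,1,b)$ and $\bs$. Writing $\btw=u^{\bt}\bw$ and letting $e\in\D$ be the fixed point of $\tw\s$, the identity $\btw\bs(v_1,\dots,v_d)=(v_2,\dots,v_d,w\s(v_1)+\t)$ shows $(e,\dots,e)\in\D^d$ is the unique fixed point of $\btw\bs$. The hypothesis on $\tw$ from Lemma \ref{iota} gives $b\s(I)b\i\subseteq I$ and $b\s(I_\der)b\i\subseteq I_\der$, hence $\bb\bs(I^d)\bb\i=I^{d-1}\times b\s(I)b\i\subseteq I^d$ and likewise $\bb\bs(I^d_\der)\bb\i\subseteq I^d_\der$ with $I^d_\der=(I_\der)^d$; so the Lang map $\Psi:h\mapsto h\i\bb\bs(h)\bb\i$ preserves $I^d_\der$. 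Setting $\iota(h_1,\dots,h_d)=\bb\bs(h)\bb\i=(h_2,\dots,h_d,b\s(h_1)b\i)$, iteration yields $\iota^d(h_1,\dots,h_d)=(b\s(h_1)b\i,\dots,b\s(h_d)b\i)$, so $\iota^d$ acts on each factor by $h\mapsto b\s(h)b\i$, which is topologically contracting on $I_\der$ by the proof of Lemma \ref{iota}. Therefore $\iota^{nd}\to 1$ on $I^d_\der$, $\Psi$ is a bijection of $I^d_\der$, and $\Psi\i$ is again $h\mapsto\cdots\iota^2(h)\i\iota(h)\i h\i$. This is the step I regard as the crux; after it, the rest is transcription of \S\ref{section_semimodule}.

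With this in hand I would run the proofs of Proposition \ref{semi-module} and its minuscule part (c) componentwise: the ingredients used there---$I^d_\der=(I_\der)^d$, $U_\bl^\pm=\prod_{i=1}^d U_{\l_i}^\pm$, the factorwise Cartan and Iwasawa decompositions, the affine-line connectedness argument in each factor, and the product decomposition of $U_\bl^+u^{\l_\bullet^\natural}\cap\overline{G^d(\co_L)u^\bmu G^d(\co_L)}$ into single-copy pieces---are all insensitive to the mixing of factors by $\bs$. Since $u^{-\bl}\bb\bs(u^{\bl})=u^{\l_\bullet^\natural}$ with $\l_\bullet^\natural=-\bl+\bt+\bw\bs(\bl)$, the analogue of the equivalence of conditions (1) and (3) in Proposition \ref{semi-module} gives $C_\bmu^\bl(b)\neq\emptyset$ if and only if $u^{\l_\bullet^\natural}\in\overline{G^d(\co_L)u^\bmu G^d(\co_L)}$, equivalently the dominant conjugate of $\l_\bullet^\natural$ is $\le\bmu$; as $\bmu$ is minuscule each $\mu_i$ is minuscule and the only dominant cocharacter $\le\mu_i$ is $\mu_i$ itself, so this is equivalent to each $(\l_\bullet^\natural)_i$ being $W_0$-conjugate to $\mu_i$, i.e.\ to $\l_\bullet^\natural$ being conjugate to $\bmu$. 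In that case the minuscule computation of Proposition \ref{semi-module}(c), applied in each factor, shows $C_\bmu^\bl(b)$ is irreducible and
\[\dim C_\bmu^\bl(b)=\sum_{i=1}^d\bigl|\{\a\in\Phi;\ (\l_i)_\a\ge 1,\ \<\a,(\l_\bullet^\natural)_i\>=-1\}\bigr|=|R(\bl)|\]
by the definition of $R(\bl)$. The whole argument is thus a routine extension of \S\ref{section_semimodule}, the only substantive point being the contraction step for $\bs$, handled via its $d$-th iterate.
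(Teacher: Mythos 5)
Your proposal is correct and follows essentially the same route as the paper's own proof: use Lemma \ref{iota} to get topological unipotence of $g_\bullet \mapsto \bb\bs(g_\bullet)\bb\i$ on $I_\der^d$, observe that the fixed point of $\btw\bs$ lies in the fundamental alcove of $Y_\RR^d$, and then transcribe the argument of Proposition \ref{semi-module}(c) componentwise. The paper states the unipotence step without your detail of passing to the $d$-th iterate $\iota^d$, which acts factorwise by $h\mapsto b\s(h)b\i$; that is a useful clarification of why Lemma \ref{iota} transfers to the multi-copy setting, but the underlying argument is the same.
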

\begin{proof}
By Lemma \ref{iota}, the endomorphism of $I_\der^d$ given by $g_\bullet \mapsto \bb \bs(g_\bullet) \bb\i$ is topologically unipotent. Hence the Lang's map $I_\der^d \to I_\der^d$ given by $g_\bullet \mapsto g_\bullet\i \bb \bs(g_\bullet) \bb\i$ is a bijection. Moreover, the fixed point of $\btw \bs$ lies in the fundamental alcove of  $Y_\RR^d$. Then the statement follows the same way as Proposition \ref{semi-module} (c).
\end{proof}

\section{Classification of simple $\varphi$-modules}\label{section_classification simple modules}
Suppose $G=\mathrm{Res}_{k|\FF_p}H$ with $k=\FF_q$ and $H$ is a reductive group over $k$, equipped with $\s_0$ induced from the Frobenius relative to $k|\FF_p$. We say $b \in G(L)$ is {\it simple} if $b \notin P(L)$ for any $\s$-stable proper parabolic subgroup $P$ of $G$ up to $\s$-conjugation. It's equivalent to say the $\varphi$-module
$$((k\otimes_{\FF_p}\bar\FF_p((u)))^n, b\varphi)$$ over $k\otimes\bar\FF_p((u))$ is irreducible, where $\varphi$ is as in the introduction.
Let $B(G)$ be the set of $\sigma$-conjugacy classes in $G(L)$. A $\sigma$-conjugacy class $[b]\in B(G)$ is called simple if all/some representative in this $\sigma$-conjugacy class is simple. Let $B(G)_s$ be the subset of $B(G)$ consisting of simple elements. Notice that $C_\mu(b)\times_{\FF}\bar\FF_p$ depends on the $[b]\in B(G)$. The goal of this section is to give a good representative of $b$ in its $\sigma$-conjugacy class.

There is a natural identification $G(L)=\prod_{i=1}^f H(L)$ with $f=[k:\FF_p]$. Under this identification, we have
\[\begin{split} \sigma: G(L)&\longrightarrow G(L)\\ (x_1,\cdots, x_f)&\longmapsto (\varphi_L(x_2), \cdots, \varphi_L(x_f), \varphi_L(x_1)).\end{split}\]

For the group $H$ over $k$, we define $T_H\subset B_H$ maximal torus and Borel subgroup of $H$ such that $T=\mathrm{Res}_{k|\FF_p}T_H$ and $B=\mathrm{Res}_{k|\FF_p}B_H$. We also define $W_{0,H}$, $X_H$, $Y_H$, $\Delta_H$, $\Phi_H$, $\Phi_H^+$ in the same way for $H$. Then there is natural identifications
\[Y\simeq \prod_{i=1}^f Y_H \text{ and } X\simeq \prod_{i=1}^f X_H.\]The endomorphism $\s: Y \to Y$ is given by $(h_1, h_2, \dots, h_f) \mapsto (ph_2, \dots, ph_f, ph_1)$.

Using the Frobenius relative to $\bar\FF_p|k$  instead of the Frobenius relative to $\bar\FF_p|\FF_p$, we define similarly $\sigma_H: H(L)\rightarrow H(L)$ and simple elements in $H(L)$. We also define $B(H)$ and $B(H)_s$ in the same way.

We can easily check the following lemma.
\begin{lem}\label{lemma_Shapiro}There is a Shapiro bijection
\[\begin{split} B(H)&\simeq B(G)\\ [b']&\mapsto [b],\end{split}\]where $b=(b', 1,\cdots, 1)\in G(L)=\prod_{i=1}^f H(L)$. Moreover, it induces a bijection on the subset of simple elements $B(H)_s\simeq B(G)_s$.
\end{lem}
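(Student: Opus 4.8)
The plan is to construct the Shapiro bijection explicitly and check that it intertwines the $\s$-conjugation relations on both sides, then verify compatibility with simplicity. First I would make precise the map in the other direction: given $b = (b', 1, \dots, 1) \in G(L) = \prod_{i=1}^f H(L)$, observe that $\s$-conjugating $b$ by an element $g = (g_1, \dots, g_f)$ produces $g\i b \s(g)$ whose $i$-th component is $g_i\i \cdot (i\text{-th comp of }b) \cdot \varphi_L(g_{i+1})$ (indices mod $f$). By successively conjugating with elements supported in a single factor, one can move all the ``mass'' into the first component: there is a canonical $g$ with $g_1 = 1$ such that $g\i b\s(g) = (b'', 1, \dots, 1)$ where $b'' = b' \cdot \varphi_L(g_2) = b' \varphi_L^{?}(\cdots)$; iterating, the composite $b' \mapsto$ (first component after full cycle) is exactly $\sigma_H$-conjugation, i.e. $x \mapsto (x)\i b' \s_H(x)$ up to the bookkeeping of which power of $\varphi_L$ appears. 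So the assignment $[b'] \mapsto [b]$ is well-defined on $B(H)$, and the reverse assignment $[b] \mapsto$ [first-component-after-collapsing] is its two-sided inverse. This is the standard Shapiro/norm argument for $\mathrm{Res}_{k|\FF_p}$, so I would present it as a short direct computation rather than invoking abstract descent.

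Next I would verify well-definedness and injectivity cleanly. For well-definedness: if $b'_1$ and $b'_2$ are $\s_H$-conjugate in $H(L)$, say $b'_2 = x\i b'_1 \s_H(x)$, then taking $g = (x, \varphi_L(x'), \dots)$ for the appropriate lift $x' $ built from $x$ via the factors shows $(b'_1, 1, \dots, 1)$ and $(b'_2, 1, \dots, 1)$ are $\s$-conjugate in $G(L)$. For injectivity: if $(b'_1, 1, \dots, 1)$ and $(b'_2, 1, \dots, 1)$ are $\s$-conjugate in $G(L)$ via some $g$, apply the collapsing procedure above to both sides simultaneously to recover a $\s_H$-conjugacy between $b'_1$ and $b'_2$. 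Surjectivity is immediate since every $[b] \in B(G)$ contains a representative of the shape $(b', 1, \dots, 1)$ (collapse an arbitrary representative).

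Finally I would treat the statement about simple elements. Recall $b$ is simple iff it lies in no $\s$-stable proper parabolic of $G$ up to $\s$-conjugation, equivalently the $\varphi$-module $((k \otimes_{\FF_p} \bar\FF_p((u)))^n, b\varphi)$ is irreducible. The $\s$-stable parabolics of $G = \mathrm{Res}_{k|\FF_p} H$ correspond, under the identification $G = \prod_{i=1}^f H$ and the cyclic action of $\s_0$, precisely to parabolics of $H$ (a $\s_0$-stable tuple $(P, P, \dots, P)$ up to the cyclic shift is determined by its first entry $P \subseteq H$), and this matching is compatible with $\s$- versus $\s_H$-conjugation under the bijection constructed above. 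Hence $b$ lies in a proper $\s$-stable parabolic of $G$ (up to $\s$-conjugacy) iff $b'$ lies in a proper parabolic of $H$ (up to $\s_H$-conjugacy), which is exactly the assertion that the Shapiro bijection restricts to $B(H)_s \simeq B(G)_s$.

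The main obstacle is purely notational: keeping the indices of the $f$ factors and the powers of $\varphi_L$ straight throughout the collapsing computation, and checking that the resulting operation on the first factor is genuinely $\s_H$-conjugation (with $\s_H$ involving $\varphi_L^f$ composed with the Frobenius of $\bar\FF_p$ over $k$) rather than some twisted variant. Once the dictionary ``$f$-tuple with cyclic $\s$'' $\leftrightarrow$ ``single object with $\s_H$'' is set up carefully, every claim — well-definedness, bijectivity, and the parabolic/simplicity compatibility — follows by inspection, which is why the authors can write ``we can easily check the following lemma.''
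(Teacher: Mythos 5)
The paper itself gives no proof of this lemma---it says only ``We can easily check the following lemma''---and your collapsing argument is exactly the standard Shapiro verification the authors have in mind; it is correct. The one bookkeeping point you flagged resolves cleanly: if $g^{-1}(b_1',1,\dots,1)\sigma(g)=(b_2',1,\dots,1)$, then comparing components $2,\dots,f$ gives $g_i=\varphi_L(g_{i+1})$ and hence $g_2=\varphi_L^{f-1}(g_1)$, so the first component reads $b_2'=g_1^{-1}b_1'\varphi_L^f(g_1)$, identifying $\sigma_H$ with $\varphi_L^f$ (the relative Frobenius for $k=\FF_q$), which at once gives well-definedness, injectivity, and (with $g_1=1$ and the same recursion run backwards) surjectivity; the diagonal embedding $P\mapsto(P,\dots,P)$ then matches $\sigma_H$-stable parabolics of $H$ with $\sigma$-stable parabolics of $G$ compatibly with this twist, proving the statement on simple elements.
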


From now on, suppose $H=\GL_n$.  Let $T_H$ (resp. $B_H$) be the subgroup of diagonal (resp. upper triangular) matrices. We have the following identifications: \begin{itemize}
\item $X_H=\oplus_{i=1}^n \ZZ e_i\simeq \ZZ^n$, $Y_H= \oplus_{i=1}^n \ZZ e_i^\vee\simeq \ZZ^n$ with the pairing  $X_H \times Y_H \to \ZZ$ induced by $\<e_i, e_j^\vee\> = \d_{i, j}$;
\item $\Phi = \sqcup_{i=1}^f \Phi_H$, where $\Phi_n = \{\a_{i, j} = e_i - e_j; 1 \le i \neq j \le n\}$;
\item $\Phi^+ = \sqcup_{i=1}^f \Phi_H^+$, where  $\Phi_H^+ = \{\a_{i, j}; 1 \le i < j \le n\}$;
\item $W_0 =W_{0,H}^f$ with $W_{0,H}=\fs_n$;
\item $\Delta=\Delta_H^f$, with $\Delta_H=\{(x_1,\cdots, x_n)\in\RR^n| x_1>x_2>\cdots>x_n>x_1-1\}$.
\end{itemize}

We will need the following result of Caruso.

\begin{prop}[Caruso]\label{Caruso}Suppose $[b]\in B(H)$ is simple. Then there exists a representative $b=au^{\tau'}w'\in [b]$ where \begin{enumerate}
\item $a\in H(\bar\FF_p)$ is central,
\item $w'\in W_{0,H}=\fs_n$ is the cyclic permutation given by $i\mapsto i+1 \mod n$ for $1\leq i\leq n$;
\item $\tau'=(m,0,\cdots,0)\in Y_H=\ZZ^n$ such that $\frac{m (q^{n'}-1)}{q^n-1}\notin\ZZ$ for any $n'|n$ with $1\leq n'<n$.
\end{enumerate}
\end{prop}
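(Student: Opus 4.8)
The proposition is the group-theoretic shadow of Caruso's classification of simple étale $\varphi$-modules \cite{C}, and the plan is to recover the normal form from the Galois side. By the (exact) equivalence between étale $\varphi$-modules over $\bar\FF_p\otimes_{\FF_p}k((u))$ and continuous $\bar\FF_p$-representations of $\Gamma_{k((u))}\cong\Gamma_{K_\infty}$ (see \cite{FO}), which matches simple objects with irreducible representations, together with the Shapiro bijection of Lemma \ref{lemma_Shapiro}, it suffices to (i) classify the $n$-dimensional irreducible $\bar\FF_p$-representations $\rho$ of $\Gamma:=\Gamma_{\FF_q((u))}$, and (ii) for each such $\rho$ exhibit the matrix $b\in\GL_n(\bar\FF_p((u)))$ of the associated $\varphi$-module, up to $\sigma_H$-conjugacy, in the stated form.

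For (i): the wild inertia $P\subseteq\Gamma$ is pro-$p$, hence has nonzero invariants on every nonzero $\bar\FF_p$-representation, so it acts trivially on the irreducible $\rho$, which therefore factors through the tame quotient $\Gamma^t$, an extension $1\to I^t\to\Gamma^t\to\widehat\ZZ\to1$ with $I^t\cong\prod_{\ell\neq p}\ZZ_\ell(1)$ and a Frobenius lift $F$ acting on $I^t$ by $t\mapsto t^q$. Since $I^t$ is abelian and $\Gamma^t/I^t=\widehat\ZZ$ is procyclic, Clifford theory gives $\rho|_{I^t}=\bigoplus_{j=0}^{n-1}\chi^{q^j}$ for a single $F$-orbit of characters, and $\rho=\mathrm{Ind}_{\Gamma_{\FF_{q^n}((u))}}^{\Gamma}\tilde\chi$, where $\Gamma_{\FF_{q^n}((u))}$ is the stabiliser of $\chi$ (the absolute Galois group of the degree-$n$ unramified extension) and $\tilde\chi$ extends $\chi$; two such extensions differ by an unramified twist $c=\tilde\chi(\mathrm{Frob}_{q^n})\in\bar\FF_p^\times$. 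Such a $\chi\colon I^t\to\bar\FF_p^\times$ has prime-to-$p$ order and, when it lies in an orbit of size dividing $n$, is recorded by a class $m\in\ZZ/(q^n-1)\ZZ$ via $I^t/(q^n-1)I^t\cong\mu_{q^n-1}(\bar\FF_p)$, with $F$ acting by $m\mapsto qm$. An elementary computation shows that the $F$-orbit of $m$ has size exactly $n$ (equivalently, $\rho$ is irreducible) if and only if $q^{n'}m\not\equiv m\pmod{q^n-1}$ for every proper divisor $n'\mid n$; writing $q^n-1=(q^{n'}-1)\cdot\tfrac{q^n-1}{q^{n'}-1}$ this is equivalent to $\tfrac{q^n-1}{q^{n'}-1}\nmid m$, i.e. to condition (3).

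For (ii): restriction of scalars (Shapiro) from the degree-$n$ unramified extension turns a rank-$1$ $\varphi$-module into a rank-$n$ one whose $\varphi$, in a suitable $\bar\FF_p((u))$-basis, is a product of a diagonal matrix recording the rank-$1$ datum on a single coordinate and the cyclic permutation $w'$; the ramified part of $\chi$ contributes a single monomial $u^m$ in that coordinate, giving the factor $u^{\tau'}$ with $\tau'=(m,0,\dots,0)$, and the unramified twist $c$ contributes a central factor $a$ with $a^n$ a chosen $n$-th root of $c$ (which exists since $\bar\FF_p$ is algebraically closed). Conversely, for $b=au^{\tau'}w'$ a direct computation shows that $(b\sigma_H)^n$ is diagonal with entries $a^n u^{q^j m}$, $0\le j<n$; comparing this with the $n$-fold iterate acting on $\mathrm{Ind}\,\tilde\chi$ identifies $[b]$ with $\rho$, and $b$ is simple precisely when (3) holds by the previous paragraph. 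Hence every simple $[b]$ has a representative of the claimed form.

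The substantive point, which is Caruso's theorem, is hidden in (ii): from an arbitrary $b$ one easily produces (by choosing a cyclic vector for $\varphi$) a $\sigma_H$-conjugate representative that is a companion matrix, but normalising that companion matrix — by a further $\sigma_H$-conjugation — so that all off-monomial entries vanish requires genuine input. One can either quote \cite{C} directly, or prove this normalisation by a successive-approximation argument over $\bar\FF_p((u))$, in which the relevant invariant of a rank-$1$ object $(\bar\FF_p((u)),\lambda)$ is the pair formed by $v(\lambda)\bmod(q^n-1)$ and the leading coefficient of $\lambda$, rather than a Newton slope; this is because $\sigma_H$ here is the \emph{étale} Frobenius $u\mapsto u^q$ acting trivially on $\bar\FF_p$, which is also why condition (3) is strictly finer than $\gcd(m,n)=1$.
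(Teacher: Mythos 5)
Your argument is essentially correct, but it takes a genuinely different route from the paper. The paper's proof is a two-line deduction from Caruso: it quotes \cite[Corollaire~8]{C} to obtain a representative $b=a'u^{\tau'}w'$ with $a'=\diag(a'_1,1,\dots,1)$ and $\tau'=(m,0,\dots,0)$, quotes \cite[Proposition~3]{C} for the divisibility criterion~(3), and then only adds the explicit computation that $\sigma_H$-conjugating by $g=\diag(c^{n-1},\dots,c,1)$ with $c^n=a'_1$ replaces $a'$ by the central scalar $a=\diag(c,\dots,c)$ (this uses $\sigma_H(g)=g$, since $\sigma_H$ acts trivially on $\bar\FF_p$). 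You instead re-derive Caruso's normal form from the Galois side: Fontaine's equivalence identifies simple $\varphi$-modules with irreducible $\bar\FF_p$-representations of $\Gamma_{\FF_q((u))}$, Clifford theory on the tame quotient shows each such $\rho$ is $\mathrm{Ind}_{\Gamma_{\FF_{q^n}((u))}}^{\Gamma}\tilde\chi$, and unwinding the induction produces the companion form $au^{\tau'}w'$ directly, with condition~(3) coming out as the ``$F$-orbit of $\chi$ has size $n$'' criterion. The trade-off: your argument is more self-contained and explains \emph{why} the normal form looks as it does, at the cost of length and of relying on the rank-one normalisation (the claim that a rank-one $\sigma_H^n$-module over $\bar\FF_p((u))$ is isomorphic to $cu^m$), which you only sketch via successive approximation; the paper buys brevity by taking Caruso's Corollaire~8 as a black box and needs only the elementary conjugation making $a$ central. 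One small slip: you write that ``$a^n$ [is] a chosen $n$-th root of $c$''; you mean $a$ is a chosen $n$-th root of $c$, i.e.\ $a^n=c$ (this is the same relation $c^n=a'_1$ that appears in the paper). Your closing remarks correctly identify the structural reason the leading coefficient is an invariant here (the Frobenius is étale, fixing $\bar\FF_p$), which is precisely what forces the separate central-factor bookkeeping and makes condition~(3) the right simplicity criterion rather than a coprimality condition.
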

\begin{proof}By \cite[Corollaire 8]{C}, we may assume $b=a'u^{\tau'}w'$ where $w'$ as in (2), $a'=\diag(a'_1,1,\cdots, 1)$ with $a'_1\in\bar\FF_p^*$ and $\tau'=(m,0,\cdots,0)$. Let $a=\diag(c,\cdots,c)$ with $c\in\bar\FF_p^*$ such that $c^n=a'_1$. It's easy to check $g^{-1}b\sigma_H(g)=au^{\tau'}w'$, where $g=\diag(c^{n-1},\cdots,c, 1)$. So (1) is also satisfied. The condition (3) follows from \cite[Proposition 3]{C} as $a$ is central.
\end{proof}

\begin{prop}  \label{simple}
If $b \in G(L)$ is simple, then it is $\s$-conjugate to some lift $\dot\tw$ of $\tw \in \tW$ such that the fixed point $\tw\s$ lies in $\D \cap (\RR \setminus \ZZ)^{fn} \subseteq \RR^{fn} = Y_\RR$.
\end{prop}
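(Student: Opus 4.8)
The plan is to reduce to the case $G=\Res_{k|\FF_p}\GL_n$ of this section, apply Caruso's normal form (Proposition~\ref{Caruso}), compute explicitly the fixed point of the resulting affine transformation, and then move that fixed point into the fundamental alcove $\D$ by a further $\s$-conjugation by an element of the affine Weyl group. First I would invoke Lemma~\ref{lemma_Shapiro} to write $b$, up to $\s$-conjugacy, as $(b_0,1,\dots,1)\in G(L)=\prod_{i=1}^f H(L)$ with $b_0\in H(L)=\GL_n(L)$ simple, and then Proposition~\ref{Caruso} to take $b_0=au^{\tau'}w'$ with $a=\diag(c,\dots,c)$ central, $w'$ the $n$-cycle $i\mapsto i+1\bmod n$, and $\tau'=me_1^\vee\in Y_H$ subject to
\[ \frac{m(q^{n'}-1)}{q^n-1}\notin\ZZ\qquad\text{for every }n'\mid n\text{ with }1\le n'<n, \]
where $q=p^f=|k|$ (we assume $n\ge 2$). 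Since $a$ is central it commutes with $u^{\tau'}$, so $\dot\tw_0:=(u^{\tau'}(a\dot w'),1,\dots,1)\in N_T(L)$ is a lift of $\tw_0:=(u^{\tau'}w',1,\dots,1)\in\tW$, and $b$ is $\s$-conjugate to $\dot\tw_0$.

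Next I would compute the unique fixed point $e=(e_1,\dots,e_f)\in Y_\RR=\prod_{i=1}^f\RR^n$ of $\tw_0\s$. Since $\s$ acts on $Y_\RR$ by $(v_1,\dots,v_f)\mapsto(pv_2,\dots,pv_f,pv_1)$, the equation $\tw_0\s(e)=e$ unravels into $e_f=pe_1$, $e_i=pe_{i+1}$ for $2\le i\le f-1$, and $e_1=qw'(e_1)+\tau'$. Hence $e_i=p^{j_i}e_1$ with $\{j_1,\dots,j_f\}=\{0,1,\dots,f-1\}$, and, using $(w')^n=1$ and that $w'$ is an $n$-cycle,
\[ e_1=(1-qw')\i\tau'=\frac{1}{1-q^n}\sum_{j=0}^{n-1}(qw')^j\tau'=\frac{-m}{q^n-1}(1,q,q^2,\dots,q^{n-1}) \]
up to a permutation of coordinates. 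In particular every coordinate of $e$ has the shape $\pm\frac{mp^s}{q^n-1}$ with $0\le s<fn$, and every difference of two coordinates lying in the same block $e_i$ has the shape $\pm\frac{mp^s(q^d-1)}{q^n-1}$ with $1\le d\le n-1$.

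Then I would check, using $\gcd(p,q^n-1)=1$, that $e$ avoids all walls and has non-integral coordinates. A coordinate $\pm\frac{mp^s}{q^n-1}$ is an integer iff $(q^n-1)\mid m$, which would force $\frac{q^n-1}{q-1}\mid m$, contradicting Caruso's condition with $n'=1$. A within-block coordinate difference $\pm\frac{mp^s(q^d-1)}{q^n-1}$ is an integer iff $(q^n-1)\mid m(q^d-1)$, i.e.\ iff $\frac{q^n-1}{q^{\gcd(n,d)}-1}\mid m$, using the identity $\gcd(q^n-1,q^d-1)=q^{\gcd(n,d)}-1$; since $\gcd(n,d)$ runs exactly over the proper divisors of $n$ as $d$ runs over $\{1,\dots,n-1\}$, this again contradicts Caruso's condition. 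Hence $e$ lies on no wall of the affine arrangement of $G$, so it lies in a well-defined alcove, and all of its coordinates are non-integral. This is the step I expect to carry the real content: observing that Caruso's irreducibility condition, stated only for divisors $n'\mid n$, already controls the divisibility $(q^n-1)\mid m(q^d-1)$ for all $d\in\{1,\dots,n-1\}$, via $\gcd(q^n-1,q^d-1)=q^{\gcd(n,d)}-1$.

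Finally I would conjugate into $\D$. The affine Weyl group $W_a\subseteq\tW$ acts simply transitively on alcoves, so there is $x\in W_a$ whose action on $Y_\RR$ carries $\D$ onto the alcove containing $e$. As $\tW$ is $\s$-stable, $\s$-conjugating $\dot\tw_0$ by a lift $\dot x\in N_T(L)$ of $x$ yields a lift $\dot\tw$ of $\tw:=x\i\tw_0\s(x)\in\tW$; a direct computation (of the same type as in the proof of Lemma~\ref{iota}) shows that the affine transformation $\tw\s$ equals $x\i\circ(\tw_0\s)\circ x$, so its fixed point is $x\i(e)\in\D$. Since $x$ has translation part in the coroot lattice $\subseteq Y=\ZZ^{fn}$ and linear part permuting coordinates within each block, $x\i$ preserves non-integrality of coordinates, so the fixed point of $\tw\s$ lies in $\D\cap(\RR\setminus\ZZ)^{fn}$; thus $\dot\tw$ is the required representative of $[b]$. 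Apart from the number-theoretic point in the previous paragraph, everything — the Shapiro reduction, solving the linear system for $e$, and the alcove conjugation — is routine.
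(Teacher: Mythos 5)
Your proposal is correct and follows essentially the same route as the paper: Shapiro reduction via Lemma~\ref{lemma_Shapiro}, Caruso's normal form, explicit computation of the fixed point, the $\gcd(q^n-1,q^d-1)=q^{\gcd(n,d)}-1$ identity to rule out integral coordinates and walls, and a final conjugation by an affine Weyl group element into $\D$. Your write-up is in fact slightly more careful than the paper's: you correctly use $q=p^f$ where the paper's displayed formula for $e'$ has $p$ (a typo), you note the tacit hypothesis $n\ge 2$ (for $n=1$ Caruso's condition is vacuous and the claim can fail), and you spell out why the final conjugation preserves non-integrality of coordinates.
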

\begin{proof}
By Lemma \ref{lemma_Shapiro}, any simple element in B(G) is of the form $[b]$, where $b=(b',1,\cdots, 1)$ and $b'$ is a lift of $\tw' = u^{\t'} w'$, where $\t' = (m, 0, \dots, 0) \in Y_H = \ZZ^n$ and $w' \in W_{0,H}=\fs_n$ as in Proposition \ref{Caruso}. Then $b$ is a lift of $\tw = u^{\t} w$ where $\t=(\t',0,\cdots, 0)\in Y$ and $w=(w',1,\cdots, 1)\in W$. Let $e$ (resp. $e'$) be the fixed point of $\tw\sigma$ on $Y_\RR=\RR^{fn}$ (resp. $\tw'\sigma_H$ on $Y_{H,\RR}=\RR^n$).

\emph{Claim: $e\in (\RR\backslash\ZZ)^{fn}$. Moreover, $e$ lies in some alcove $\D_1$ in $Y_\RR$.}

By the description of $b'$, we can compute that $$e' = - (\frac{m}{p^n - 1}, \frac{m p}{p^n - 1}, \dots, \frac{m p^{n-1}}{p^n - 1}).$$ Moreover $$\frac{m p^i}{p^n - 1} - \frac{m p^j}{p^n - 1} \notin \ZZ \text{ for } 0 \le i < j \le n-1$$ by the condition (3) in Proposition \ref{Caruso} combined with the easy fact that
\[\mathrm{gcd}(q^{a_1}-1, q^{a_2}-1)=q^{\mathrm{gcd}(a_1,a_2)}-1\] for any positive integers $a_1$ and $a_2$. In particular, this implies $\frac{m p^{i-1}}{p^n - 1} \notin \ZZ$ (for $1 \le i \le n$) and hence $e' \in (\RR \setminus \ZZ)^n$. Note that the action of $(b\sigma)^f$ on $G(L)=\prod_{i=1}^f H(L)$ stablizes each component of $H(L)$ and its restriction to the first component equals to $b'\sigma_H$. It follows that
  \[e=(e', pe', \cdots, p^{f-1}e').\] This implies the Claim.

Let $z \in \tW$ such that $\D_1 = z(\D)$ and let $\tw = z\i \tw \s(z)$. Then the fixed point $z\i(e')$ of $\tw \s$ lies in $\D$ as desired.
\end{proof}

\section{Main results}\label{section_main}
In this section, let $G=\mathrm{Res}_{k|\FF_p}\GL_n$. Then $G_{\bar\FF_p}\simeq(\GL_n)^f$ with $f=[k: \FF_p]$. We will prove two connectedness results for the Kisin variety $C_\mu(b)$.

\subsection{} The first result is the following for $\mu$ of particular form.
\begin{thm} \label{thm A}
Suppose $b \in G(L)$ is simple and $\mu=(\mu_1,\cdots,\mu_f)\in (\ZZ^n)^f$ with $\mu_i=(\mu_{i,1}, \cdots, \mu_{i, n})$ satisfying $\mu_{i,1}\geq \mu_{i, 2}=\cdots=\mu_{i, n}$ for all $1\leq i\leq f$. Then  $C_\mu(b)$ is geometrically connected if it is nonempty.
\end{thm}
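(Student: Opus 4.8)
The plan is to reduce the connectedness of $C_\mu(b)$ to a combinatorial statement about the semi-module stratification introduced in Section \ref{section_semimodule}, and then to exhibit a single semi-module stratum that is ``central'' enough that every other nonempty stratum sits in its closure. First I would use Proposition \ref{simple} to replace $b$ by a lift $\dot\tw$ of some $\tw = u^\t w \in \tW$ whose fixed point $e$ lies in $\D \cap (\RR \setminus \ZZ)^{fn}$, so that Lemma \ref{iota} and Proposition \ref{semi-module} apply: we get the decomposition $C_\mu(b) = \sqcup_{\l \in Y} C_\mu^\l(b)$ with each nonempty piece connected, and nonemptiness of $C_\mu^\l(b)$ controlled by the condition $\l^\natural = -\l + \t + w\s(\l) \le \mu$. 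Since each stratum is connected, it suffices to show that the index set $S = \{\l \in Y : \l^\natural \le \mu\}$, together with the closure relations among the strata, gives a connected poset; in fact I would aim for the stronger statement that there is a distinguished $\l_0 \in S$ (the one with $C_\mu^{\l_0}(b)$ of maximal dimension, or the one attached to the ``$u^{\l_0}$ fixed by $\tw\s$'' point) such that $u^{\l}$ and $u^{\l_0}$ lie in a common connected subset of $C_\mu(b)$ for every $\l \in S$.

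The key point where the hypothesis on $\mu$ enters is the very special shape $\mu_i = (\mu_{i,1}, \mu_{i,2}, \dots, \mu_{i,2})$, i.e. $\mu$ differs from a central (scalar) cocharacter by a sum of fundamental coweights $\varpi_1$ in each factor. I would exploit this via the multi-copy formalism of \S\ref{subsection_muticopy}: writing $\mu = \mu' + \mu''$ where $\mu'$ is central and $\mu''$ has all $\mu''_{i,j} \in \{0,1\}$ (minuscule, a sum of $\varpi_1$'s), Lemma \ref{lemma_multi} gives a surjection $C_{\bmu}(b)(\bar\FF_p) \twoheadrightarrow C_\mu(b)(\bar\FF_p)$ from a product-type Kisin variety built from minuscule pieces. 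Twisting by the central part is harmless (it just shifts by a power of $u$ times a central element and does not affect connectedness), so connectedness of $C_\mu(b)$ follows from connectedness of the minuscule multi-copy variety, where Proposition \ref{dim} gives us an explicit irreducible-components-with-dimension picture: each $C_\bmu^\bl(b)$ is irreducible of dimension $|R(\bl)|$ and nonempty exactly when $\l_\bullet^\natural$ is conjugate to $\bmu$. So the problem becomes: show the set of $\bl$ with $\l_\bullet^\natural \sim \bmu$, with the adjacency coming from ``strata whose closures meet,'' is connected — and since $\mu''$ is a sum of $\varpi_1$'s, the combinatorics of which cocharacters are conjugate to $\bmu$ is governed by choosing, in each factor, a single coordinate to carry the $1$, which should make the adjacency graph a product of ``cyclic'' graphs coming from the cyclic permutation $w'$ of Proposition \ref{Caruso}.

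Concretely, I expect the main work to be: (i) make precise the closure relation $\overline{C_\mu^\l(b)} \supseteq C_\mu^{\l'}(b)$ in terms of $\l, \l'$ — this is where one needs to move within a single stratum toward its boundary, probably using the affine-line argument already appearing in the proof of Proposition \ref{semi-module}(a) (the map $z \mapsto z^\xi h z^{-\xi}$), degenerating $u^\l$ into $u^{\l'}$ for suitable neighbours $\l'$; and (ii) check that the resulting adjacency graph on $S$ (equivalently on the set of $\bl$ with $\l_\bullet^\natural \sim \bmu$) is connected, using that $w$ restricted to the first copy is the $n$-cycle $i \mapsto i+1$, which acts transitively and so links the $n$ possible positions of the ``$1$'' in each factor. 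The step I anticipate being the genuine obstacle is (i): controlling the closure of a semi-module stratum inside the affine Grassmannian and verifying that enough neighbouring strata appear in it — the Frobenius $\sigma$ being non-bijective (unlike the affine Deligne–Lusztig case) means the usual ``semi-module graph is connected'' arguments of de Jong–Oort and of \cite{N} do not transfer verbatim, so I would likely need a direct computation, for each $\l \in S$ with $u^\l \neq$ the maximal-dimensional point, of an explicit one-parameter family in $C_\mu(b)$ connecting $u^\l$ to some $u^{\l'}$ with $\l'$ strictly closer (in an appropriate partial order, e.g. measured by $|R(\bl)|$ or by distance to the $\tw\s$-fixed point) to the central stratum, and then induct.
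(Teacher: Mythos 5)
Your reductions match the paper's: you correctly invoke Proposition \ref{simple} to normalize $b$ so that Lemma \ref{iota} applies, you subtract a central cocharacter to reduce to $\mu_i = m_i \omega_1^\vee$, and you pass via the multi-copy Lemma \ref{lemma_multi} to a minuscule $\bmu$ with each $\mu^k \in \{0, \omega_1^\vee\}$, at which point Proposition \ref{dim} furnishes the dimension formula $\dim C_\bmu^\bl(b) = |R(\bl)|$ on every nonempty stratum. These are exactly the first moves in the paper's proof. The gap is in the core connectedness argument for the resulting minuscule multi-copy variety (Theorem \ref{thm A'}).

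You propose to control the closure relations among strata and prove that an adjacency graph on $\{\bl \;:\; \l_\bullet^\natural \sim \bmu\}$ is connected, and you correctly anticipate that deciding which strata lie in the closure of a given one is hard when $\s$ is not bijective. The paper's argument sidesteps this entirely, and this is the idea you are missing. It does not build degenerations between strata, nor does it single out a maximal-dimensional stratum as you suggest; instead it shows that every connected component $C$ contains a \emph{zero}-dimensional stratum, and that there is a \emph{unique} $\bl$ with $\dim C_\bmu^\bl(b) = 0$. For the first point one picks $\bl$ in $C$ minimizing the ambient dimension $d_\bl$ of the affine cell $I^d u^\bl G^d(\co_L)/G^d(\co_L)$; if $C_\bmu^\bl(b)$ had dimension $\ge 1$ it would be a closed, irreducible, non-projective subvariety of that affine cell, so its closure in the projective $C_\bmu(b)$ must leak into a strictly smaller ambient cell, contradicting minimality of $d_\bl$ --- crucially this only uses that \emph{some} smaller stratum is met, not \emph{which} one, so no explicit closure analysis is required. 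The uniqueness of the zero-dimensional stratum is then proved by a combinatorial lemma (Lemma \ref{delta}) built on the invariants $\d(v)$, $h(v)$ and the operation $\varsigma$; one shows (Claim 1) that $\hl^k := \l^k - e^k$ is obtained from $\hl^{k+1}$ by $\varsigma$ whenever $m^k = 1$, that (Claim 2) some $\hl^{k_0}$ has $h(\hl^{k_0}) = 0$, and that the monotonicity of $\d$ and $h$ under $\varsigma$ then forces any two candidate zero-dimensional $\bl$'s to agree. This uniqueness lemma is the essential new input, and your proposal does not contain a substitute for it. Your adjacency-graph route is closer in spirit to what the paper does in Theorem \ref{thm B} for $\GL_3$, where explicit $\mathrm{SL}_2$-families connect neighbouring strata; whether that route can be pushed through for general $n$ under the hypothesis of Theorem \ref{thm A} is unclear, and the paper does not attempt it.
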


\begin{rmk} Theorem \ref{thm A} is proved in \cite[Theorem 1,1]{H} for $n=2$.
\end{rmk}

In order to prove this theorem, we need a connectedness result for Kisin varieties in the multi-copy case (cf.\S \ref{subsection_muticopy}). For any positive integer $d$, consider the group $G^d$. Let $N=df$. We fix two identifications:
\[\begin{split}Y^d &\simeq (\ZZ^n)^N\\ \bv = (v_1, \dots, v_d) &\mapsto (v^1, \cdots, v^N)\end{split} \]
where $v^{i + (j - 1) d}=v_{i, j} $ for any $1\leq i\leq d$ and $1\leq j\leq f$ with $v_i = (v_{i, 1}, \dots, v_{i, f}) \in Y = (\RR^n)^f$ and
\[\begin{split}
 W_0^d &\simeq (\fs_n)^N\\ \bw = (w_1, \dots, w_d)&\mapsto (w^1, \cdots, w^N)
\end{split}\]
where $w^{i + (j - 1) d}=w_{i, j} $ with $w_i = (w_{i, 1}, \dots, w_{i, f}) \in W_0=\fs_n^f$.

For $\bv\in Y^d$ and $\bw\in W_0^d$, we also define $v^{N+1}:=v^1$ and $w^{N+1}:=w^1$.

\bigskip
Let $\o_1^\vee = (1, 0, \dots, 0) \in \ZZ^n$. Let $\bmu \in Y^d = (\ZZ^n)^N$ such that $\mu^k = m^k \o_1^\vee$ with $m^k \in \{0, 1\}$ for $1 \le k \le N$. Let $b \in G(L)$ be as in Lemma \ref{iota}. We consider the variety $C_\bmu(b)$.
\begin{thm}\label{thm A'}
Let $\bmu$ and $b$ be as above such that $C_\bmu(b) \neq \emptyset$. Then there exists a unique $\bl \in Y^d$ such that $\dim C_\bmu^\bl(b) = 0$. Moreover, $C_\bmu(\bb)$ is connected.
\end{thm}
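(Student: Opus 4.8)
The plan is to reduce the connectedness of $C_\bmu(\bb)$ to the combinatorial statement that there is exactly one semi-module stratum of dimension zero, and that every other stratum has its closure passing through this distinguished zero-dimensional stratum. Since $\bmu$ is minuscule, Proposition \ref{dim} applies: the non-empty strata $C_\bmu^\bl(b)$ are exactly those for which $\l_\bullet^\natural$ is conjugate to $\bmu$, each such stratum is irreducible of dimension $|R(\bl)|$, and $C_\bmu(b) = \sqcup_\bl C_\bmu^\bl(b)$ is a finite disjoint union of irreducible locally closed pieces. Connectedness of $C_\bmu(b)$ therefore follows once I know (i) there is a unique $\bl_0$ with $\dim C_\bmu^{\bl_0}(b) = 0$, i.e. $R(\bl_0) = \emptyset$, and (ii) for every other non-empty $\bl$, the closure $\overline{C_\bmu^\bl(b)}$ contains $u^{\bl_0}$ (equivalently, meets $C_\bmu^{\bl_0}(b)$), so that the strata are all glued along the single zero-dimensional piece. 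Actually it suffices for connectedness that the union of strata is connected as a topological space, which follows if every stratum's closure meets the distinguished point; uniqueness of $\bl_0$ is then a bonus that also follows from the combinatorics.

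First I would translate the condition ``$\l_\bullet^\natural$ conjugate to $\bmu$'' into an explicit recursion on the coordinates. Writing $\bl = (\l^1, \dots, \l^N) \in (\ZZ^n)^N$ with $\l^k = (\l^k_1, \dots, \l^k_n)$, and using $\btw = u^{\bt}\bw$ together with the shift structure of $\bs$, the identity $\l_\bullet^\natural = -\bl + \bt + \bw\bs(\bl)$ becomes, componentwise, something of the form $(\l_\bullet^\natural)^k = -\l^k + (\text{shift of }\bt^k) + w^k\cdot(\text{rescaled shift of }\l^{k+1})$; because $b$ comes from Caruso's normal form the $\bt$-part is supported on the first coordinate and the $w^k$ are (mostly) the standard cyclic permutation or identity. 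The minuscule condition forces each $(\l_\bullet^\natural)^k$ to be a permutation of $(m^k, 0, \dots, 0)$, i.e. it has one coordinate equal to $m^k \in \{0,1\}$ and the rest $0$ (or all zero). This pins down $\l^k$ from $\l^{k+1}$ up to the choice of which coordinate carries the mass, and after going around the cycle of length $N$ one gets a consistency condition. I expect the upshot to be: the possible $\bl$ are parametrized by a choice, at each index $k$, of a ``position'' in $\{1, \dots, n\}$ recording where the jump happens, subject to compatibility under the permutations $w^k$, and the distinguished $\bl_0$ is the one where all positions are taken to be $1$ (the ``dominant'' choice), which forces $R(\bl_0) = \emptyset$.

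The genuinely technical step — and the one I expect to be the main obstacle — is the closure relation: showing that for every admissible $\bl \neq \bl_0$, the point $u^{\bl_0}$ lies in $\overline{C_\bmu^\bl(b)}$. The natural approach is to exhibit, inside the affine Grassmannian, an explicit curve (typically an $\AA^1$ obtained from a one-parameter subgroup acting on a root-group coordinate, as in the proof of Proposition \ref{semi-module}(a)) whose generic point lies in $C_\bmu^\bl(b)$ and whose special point is $u^{\bl_0}$; concretely one takes a point $h u^\bl$ with $h$ in the relevant product of root subgroups $\prod_{\a \in R(\bl)} U_\a$, and degenerates the root-group parameters to zero while simultaneously adjusting by the torus to land on a lower stratum. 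Iterating such degenerations (one jump-position at a time, moving each ``jump'' toward coordinate $1$) should connect any $\bl$ to $\bl_0$ through a chain of strata, hence show $C_\bmu(b)$ is connected; the bookkeeping of which root subgroups survive and which one-parameter subgroups to use — and checking that the degenerate point really lands in the claimed lower stratum rather than leaving $C_\bmu(b)$ — is where the real work lies. Once connectedness is established, uniqueness of the zero-dimensional stratum $\bl_0$ follows because a zero-dimensional stratum is both closed (no lower strata in its closure) and, being hit by the closure of every other stratum as just shown, must be the unique sink of the specialization order; alternatively it falls directly out of the combinatorial classification of admissible $\bl$ with $R(\bl)=\emptyset$.
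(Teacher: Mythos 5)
Your overall outline — reduce to the combinatorics of semi-module strata using Proposition \ref{dim}, identify a unique zero-dimensional stratum, and then glue the strata together — is in the right spirit, but the route you propose to connectedness diverges from the paper's at the crucial step, and the step you identify as ``the main obstacle'' is left genuinely unresolved.

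The paper never exhibits explicit $\AA^1$-degenerations from an arbitrary stratum $C_\bmu^\bl(b)$ down to the distinguished zero-dimensional one. Instead it uses a much cheaper argument: fix a connected component $C$, and among all $\bl$ with $C_\bmu^\bl(b)\cap C\neq\emptyset$ choose one for which $d_\bl=\dim I^d u^\bl G^d(\co_L)/G^d(\co_L)$ is minimal. If $\dim C_\bmu^\bl(b)>0$, then $C_\bmu^\bl(b)$ is an irreducible, positive-dimensional, affine (hence non-projective) variety sitting as a closed subvariety of the cell $I^d u^\bl G^d(\co_L)/G^d(\co_L)$; since $C_\bmu(b)$ is projective, the closure of $C_\bmu^\bl(b)$ inside $C_\bmu(b)$ must leak into some strictly smaller cell $I^d u^\bchi G^d(\co_L)/G^d(\co_L)$, giving a non-empty stratum with $d_\bchi<d_\bl$, a contradiction. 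So every component contains a zero-dimensional stratum. Combined with uniqueness of that stratum, connectedness is immediate. This sidesteps entirely the question of whether each stratum's closure contains $u^{\bl_0}$ (which you'd otherwise have to prove, and which would require controlling the specialization order among the strata in a way that is far from obvious in the affine Grassmannian).

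There is a second gap in your sketch. You guess that ``the distinguished $\bl_0$ is the one where all positions are taken to be $1$ (the `dominant' choice), which forces $R(\bl_0)=\emptyset$,'' and that uniqueness ``falls directly out of the combinatorial classification.'' But the actual uniqueness argument in the paper is considerably subtler. It shifts coordinates by the fixed point $\be$ of $\btw\bs$, setting $\hbl=\bl-\be\in(\RR^n)^N$, and proves that for a zero-dimensional stratum the recursion
\[
\hl^k=\begin{cases}\e^k w^k(\hl^{k+1}),&m^k=0,\\ \varsigma(\e^k w^k(\hl^{k+1})),&m^k=1,\end{cases}
\]
is forced (Claim 1 there; establishing it requires showing that the ``jump'' index $i_0$ must pick out the maximum of $[\hl^{k+1}]$, else some root $\a_{i_1,i_0}$ lies in $R(\bl)$, contradicting $|R(\bl)|=0$). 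It then proves that some $\hl^{k_0}$ has $h(\hl^{k_0})=0$ (Claim 2, using that $b$ simple implies $[e^k]\cap\ZZ=\emptyset$), and finally uses the monotonicity and rigidity properties of the invariants $\d,h,\varsigma$ from Lemma \ref{delta} to force two candidate $\bl$'s to coincide. None of this is captured by ``all positions equal $1$''; in fact the recursion is deterministic once $\hl^1$ is fixed, and the uniqueness comes from the periodicity of the cycle together with the $h$-invariant argument, not from a free parametrization by positions. Finally, your fallback deduction of uniqueness (``being hit by the closure of every other stratum, must be the unique sink'') presupposes exactly the specialization statement you haven't proved, so it is circular as written.
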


We first use this result to prove Theorem \ref{thm A}
\begin{proof}[Proof of Theorem \ref{thm A}] If $\chi \in Y$ is a central, then we have the identification $$C_\mu(b) = C_{\mu + \chi}(u^\chi b).$$ By replacing $\mu$ and $b$ with $\mu + \chi$ and $u^\chi b$ respectively for some suitable central cocharacter $\chi$, we may assume that $\mu = (m_1 \o_1^\vee, \dots, m_f \o_1^\vee)$ with $m_k \in \ZZ_{\ge 0}$ for $1 \le k \le f$. By Proposition \ref{simple}, we may assume further that $b$ satisfies the condition in Lemma \ref{iota}. The theorem then follows from Theorem \ref{thm A'} and Lemma \ref{lemma_multi}.
\end{proof}

Now it remains to prove Theorem \ref{thm A'}. We need some combinatorial preparations.

For any $v=(v(1),\cdots, v(n)) \in \RR^n$, let \begin{itemize}
\item $[v] = \{v(k); 1 \le k \le n\} \subseteq \RR$,
\item $\<v\> = v(1) + \cdots + v(n) \in \RR$,
\item $\d(v) = \<v\> - n \min[v] \in \RR$,
\item $h(v) = \sum_{i=1}^n \lfloor v(i) - \min[v] \rfloor \in \ZZ_{\ge 0}$.
\end{itemize}
Note that $h(v)\leq \d(v)<h(v)+n-1$. Moreover, $h(v)=0$ if and only if $\max[v] - \min[v] < 1$.

 Define $\varsigma(v) \in \RR^n$ such that
 \[\varsigma(v)(i) = \begin{cases}v(i) - 1, &\text{ if } v(i) = \max[v]\\  v(i), &\text{otherwise}.\end{cases}\]
  For $l \in \ZZ_{> 0}$, set $\varsigma^l(v) = \varsigma \circ \cdots \circ \varsigma(v)$, where $\varsigma$ appears $l$ times.

\begin{lem} \label{delta}
Let $e \in \RR^n$ such that $e(i) - e(j) \notin \ZZ$ for $1 \le i < j \le n$. Let $v, v' \in \ZZ^n - e$ such that $\<v\> = \<v'\>$.

(1) $\d(\varsigma(v)) = \d(v) - 1$ and $h(\varsigma(v)) = h(v) - 1$ if $h(v) \ge 1$;

(2) $h(\varsigma(v)) = 0$ if $h(v) = 0$;

(3) $v = v'$ if $h(v) = h(v') = 0$;

(4) $\d(v) \le \d(v')$ if and only if $h(v) \le h(v')$;

(5) $\d(\varsigma(v)) \le \d(\varsigma(v'))$ if $\d(v) \le \d(v')$;
\end{lem}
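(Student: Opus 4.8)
The plan is to prove the two ``single-vector'' statements (1) and (2) by direct bookkeeping, and then deduce (3), (4) and (5) from a single comparison lemma. The only structural feature of the hypothesis I would use is that for $v\in\ZZ^n-e$ one has $v(i)-v(j)\equiv e(j)-e(i)\notin\ZZ$ whenever $i\neq j$; in particular the coordinates of $v$ are pairwise distinct, so $[v]$ has a unique largest element $M=\max[v]$, attained at a single index $j_M$, and $\varsigma$ changes only the $j_M$-th coordinate, lowering it by $1$; moreover $M-\min[v]\notin\ZZ$. For (1): if $h(v)\ge 1$ then $M-\min[v]\ge 1$, hence $M-\min[v]>1$ since it is not an integer, so $M-1>\min[v]$ and $\min[\varsigma(v)]=\min[v]$; then $\langle\varsigma(v)\rangle=\langle v\rangle-1$ gives $\d(\varsigma(v))=\d(v)-1$, while $\lfloor (M-1)-\min[v]\rfloor=\lfloor M-\min[v]\rfloor-1$ gives $h(\varsigma(v))=h(v)-1$. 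For (2): if $h(v)=0$ then $M-\min[v]<1$, so after lowering $M$ every coordinate of $\varsigma(v)$ lies in $(M-1,M)$, whence $\max[\varsigma(v)]-\min[\varsigma(v)]<1$ and $h(\varsigma(v))=0$. Iterating (1) and then landing in the situation of (2), for every $0\le k\le h(v)$ one obtains $h(\varsigma^{k}(v))=h(v)-k$ and $\d(\varsigma^{k}(v))=\d(v)-k$; in particular $\varsigma^{h(v)}(v)$ has $h=0$, sum $\langle v\rangle-h(v)$, and $\d$-value $\d(v)-h(v)$.

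Next I would isolate the following comparison lemma: if $w,w'\in\ZZ^n-e$ satisfy $\langle w\rangle=\langle w'\rangle$, $h(w)=0$, and $w'\neq w$, then $\min[w']<\min[w]$, equivalently $\d(w)<\d(w')$. The proof is short: $d:=w'-w$ is a nonzero integer vector with $\langle d\rangle=0$, so $d(i)\le -1$ for some $i$; since $h(w)=0$, all coordinates of $w$ lie in an interval of length $<1$, so $w'(i)=w(i)+d(i)\le\max[w]-1<\min[w]$, whence $\min[w']<\min[w]$; the reformulation in terms of $\d$ is immediate from $\d(\cdot)=\langle\cdot\rangle-n\min[\cdot]$. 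Applied symmetrically this gives (3) at once: if $v\neq v'$ both had $h=0$ with $\langle v\rangle=\langle v'\rangle$, we would get $\min[v']<\min[v]<\min[v']$, a contradiction.

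For (4), set $k=h(v)$ and $w=\varsigma^{k}(v)$, so $h(w)=0$, $\langle w\rangle=\langle v\rangle-k$ and $\d(w)=\d(v)-k$. Assume $h(v)\le h(v')$; then $\varsigma$ may be applied $k$ times to $v'$ as well, producing $w''=\varsigma^{k}(v')$ with $h(w'')=h(v')-k\ge 0$, $\langle w''\rangle=\langle w\rangle$, $\d(w'')=\d(v')-k$. If $h(v)=h(v')$ then $h(w'')=0$, so $w''=w$ by (3) and $\d(v)=\d(v')$; if $h(v)<h(v')$ then $h(w'')\ge 1$, so $w''\neq w$ and the comparison lemma gives $\d(w)<\d(w'')$, i.e. $\d(v)<\d(v')$. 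Hence $h(v)\le h(v')\Rightarrow\d(v)\le\d(v')$, and the reverse implication is the contrapositive of the strict case with $v,v'$ interchanged. For (5): by (1) and (2), $h(\varsigma(v))=\max(h(v)-1,0)$ and likewise for $v'$, and $t\mapsto\max(t-1,0)$ is non-decreasing; since $\langle\varsigma(v)\rangle=\langle\varsigma(v')\rangle$, (4) turns $\d(v)\le\d(v')$ into $h(v)\le h(v')$, hence into $h(\varsigma(v))\le h(\varsigma(v'))$, and applying (4) to $\varsigma(v),\varsigma(v')$ gives $\d(\varsigma(v))\le\d(\varsigma(v'))$.

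The only step that needs genuine thought is identifying the comparison lemma as the right statement and seeing that it supplies all the strictness and monotonicity needed in (3)--(5): everything there ultimately reduces to the single geometric fact that lowering the maximal entry of an $h=0$ vector drops its minimum strictly below the old minimum. The remaining arguments are floor-bookkeeping, where the only point to watch is whether $\min[\varsigma(v)]$ stays put (when $h(v)\ge 1$) or drops (when $h(v)=0$), since $\d$ and $h$ are both measured relative to the minimum.
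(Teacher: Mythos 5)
Your proof is correct. Parts (1), (2) and (5) follow essentially the same bookkeeping route as the paper. The interesting divergence is in (3)--(4): you isolate a single comparison lemma (if $w,w'\in\ZZ^n-e$ with $\langle w\rangle=\langle w'\rangle$, $h(w)=0$ and $w'\neq w$, then $\min[w']<\min[w]$, equivalently $\d(w)<\d(w')$) and derive both (3) and the ``if'' direction of (4) from it, reducing to the $h=0$ case by applying $\varsigma^{h(v)}$ to both vectors. The paper proves (3) by a closely related but un-isolated argument, and proves (4) by a more hands-on computation: after normalizing so $h(v)=0$, it writes the sorted coordinates $x_1<\cdots<x_n$ of $\varsigma^{h(v')}(v)=\varsigma^{h(v')}(v')$ and recovers $\d(v)-\d(v')=n(x_1-x_{h(v')+1})$ directly, observing this is $\le 0$ with equality iff $h(v')=0$. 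Both arguments are sound; yours is cleaner in that it makes explicit the single geometric fact (lowering the maximal coordinate of an $h=0$ vector pushes the minimum strictly down) that drives all the strictness in (3)--(5), and it avoids the explicit coordinate bookkeeping. The one point to keep in mind when reducing to $h=0$ in (4) is that (1) requires $h\ge 1$ at each of the $h(v)$ applications of $\varsigma$ to $v'$, which holds precisely because $h(v')\ge h(v)$; you say this implicitly and it is correct.
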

\begin{proof}
Notice that $[v]$ consists of exactly $n$ elements. Then statements (1) and (2) follow by definition.

(3). Suppose $h(v) = h(v') = 0$, that is, $\max [v] - \min [v], \max [v'] - \min [v'] < 1$. Note that $v - v' \in \ZZ^n$ and $\<v - v'\> = 0$. If $v - v' \neq 0$, then there exist $1 \le i \neq j \le n$ such that $v(i) - v'(i), v'(j) - v(j) \in \ZZ_{\ge 1}$. Thus $$v(i) - v(j) = (v(i) - v'(i)) + (v'(i) - v'(j)) + (v(j)' - v(j)) > 1 - 1 + 1 = 1,$$ which is a contradiction.

(4). Suppose $h(v) \le h(v')$. By (1) we may replace $v, v'$ with $\varsigma^{h(v)}(v), \varsigma^{h(v)}(v')$ respectively so that $h(v) = 0$. In particular, $\d(v) < n-1$. To show $\d(v) \le \d(v')$ we may assume $h(v') \le \d(v') < n-1$. By (1) and (2) we have $$h(\varsigma^{h(v')}(v)) = h(\varsigma^{h(v')}(v')) = 0,$$ which means $\varsigma^{h(v')}(v) = \varsigma^{h(v')}(v')$ by (3). Write $$[\varsigma^{h(v')}(v)] = [\varsigma^{h(v')}(v')] = \{x_1, \dots, x_n\},$$ where $x_1 < x_2 < \cdots < x_n \in \RR$. As $h(v) = 0$, we deduce that $$[v] = \{x_{h(v')+1}, x_{h(v')+2}, \dots, x_n, x_1 + 1, x_2 + 1, \dots, x_{h(v')} + 1\},$$ which means that \begin{align*} \d(v) = \<\varsigma^{h(v')}(v')\> - n x_{h(v') + 1} + h(v') \le \<\varsigma^{h(v')}(v')\> - n x_1 + h(v') = \d(v'), \end{align*} where the equality holds if and only if $h(v') = 0 = h(v)$. This finishes the proof of (4).

(5). If $\d(v) \le \d(v')$, then $h(v) \le h(v')$ by (4). So $h(\varsigma(v)) \le h(\varsigma(v'))$ by (1) and (2), which means $\d(\varsigma(v)) \le \d(\varsigma(v'))$ by (4).
\end{proof}

Let $\bs: G^d(L) \to G^d(L)$ be the endomorphism defined in \S \ref{subsection_muticopy}. Then the induced linear map $\bs: (\RR^n)^N = Y_\RR^d \to Y_\RR^d = (\RR^n)^N$ is given by $$(v^1, v^2, \dots, v^N) \mapsto (\e^1 v^2, \dots \e^{N-1} v^N, \e^N v^1),$$ where $\e^k = p$ if $k \in d \ZZ$ and $\e^k = 1$ otherwise.

\begin{lem}\label{lemma_semimodule_sum_equal}Let $b$ and $\bmu$ as in Theorem \ref{thm A'}. Let $\bl, \eta_\bullet\in Y^d$ such that $C_{\bmu}^{\bl}(b)$ and $C_{\bmu}^{\eta_\bullet}(b)$ are both non-empty. Then $\<\l^k\>=\<\eta^k\>$ for any $1\leq k\leq N$.
\end{lem}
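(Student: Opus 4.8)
The plan is to use Proposition~\ref{dim} to pin down the total coordinate sum $\<(\l_\bullet^\natural)^k\>$ of each component of $\l_\bullet^\natural$ (and of $\eta_\bullet^\natural$) purely in terms of $\bmu$, and then to recover the numbers $\<\l^k\>$ from a cyclic linear recursion.

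First I would observe that $\bmu$ is minuscule: each $\mu^k$ is either $0$ or the minuscule coweight $\o_1^\vee$ of $\GL_n$, so $\bmu$ is a minuscule dominant cocharacter of $G^d_{\bar\FF_p}\cong\GL_n^N$. Since in addition $b$ is as in Lemma~\ref{iota}, Proposition~\ref{dim} applies, and the non-emptiness of $C_\bmu^\bl(b)$ forces $\l_\bullet^\natural$ to be conjugate to $\bmu$; the same holds for $\eta_\bullet^\natural$. Applying the linear functional $\<\cdot\>$ on $\RR^n$, which is invariant under the coordinate permutations making up $W_0=\fs_n^f$, we conclude $\<(\l_\bullet^\natural)^k\>=\<\mu^k\>=\<(\eta_\bullet^\natural)^k\>$ for every $1\le k\le N$. (Only the weaker inequality $\l_\bullet^\natural\le\bmu$ is really needed here, since the Bruhat order on $\GL_n$-cocharacters preserves $\<\cdot\>$, but invoking Proposition~\ref{dim} is cleanest.)

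Next I would expand $\l_\bullet^\natural=-\bl+\bt+\bw\bs(\bl)$ componentwise. Using the explicit form $(\bs(\bl))^k=\e^k\l^{k+1}$ (with $\l^{N+1}:=\l^1$) recorded in \S\ref{subsection_muticopy}, together with the fact that $\bw$ acts on the $k$-th factor through a permutation in $\fs_n$, which leaves $\<\cdot\>$ unchanged, one obtains
\[\<(\l_\bullet^\natural)^k\>=-\<\l^k\>+\<\bt^k\>+\e^k\<\l^{k+1}\>,\]
and the analogous identity with $\bl$ replaced by $\eta_\bullet$. Subtracting the two and writing $a^k:=\<\l^k\>-\<\eta^k\>$ cancels the $\bl$-independent term $\<\bt^k\>$ as well as $\<\mu^k\>$, leaving the cyclic system $a^k=\e^k a^{k+1}$ for $1\le k\le N$.

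Finally, composing these relations once around the cycle gives $a^1=\bigl(\prod_{k=1}^N\e^k\bigr)a^1=p^{f}a^1$, since among $k\in\{1,\dots,N\}$ exactly the $f$ multiples $d,2d,\dots,fd=N$ of $d$ contribute a factor $p$. As $p\ge2$ this forces $a^1=0$, and then $a^2=0,\dots,a^N=0$ follow successively from $a^k=\e^k a^{k+1}$ with $\e^k\ne0$; hence $\<\l^k\>=\<\eta^k\>$ for all $k$. I do not anticipate a serious obstacle; the only delicate points are checking that $\bmu$ is genuinely minuscule so that Proposition~\ref{dim} can be invoked, and transporting $\bs$, $\bw$ and $\bt$ correctly through the identifications $Y^d\simeq(\ZZ^n)^N$ and $W_0^d\simeq\fs_n^N$ fixed in \S\ref{section_main}.
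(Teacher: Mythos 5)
Your proof is correct and takes essentially the same route as the paper: invoke Proposition \ref{dim} (or even just $\l_\bullet^\natural\le\bmu$) to get $\<(\l_\bullet^\natural)^k\>=\<\mu^k\>=\<(\eta_\bullet^\natural)^k\>$, unwind the definition of $\l_\bullet^\natural$ to obtain the cyclic relation $\<\l^k\>-\<\eta^k\>=\e^k(\<\l^{k+1}\>-\<\eta^{k+1}\>)$, and iterate around the cycle to force the difference to vanish since $\prod_k\e^k=p^f\ne1$. The only quibble is a small misattribution: the explicit formula $(\bs(v_\bullet))^k=\e^k v^{k+1}$ appears in \S\ref{section_main} just before the lemma rather than in \S\ref{subsection_muticopy}.
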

\begin{proof}By Proposition \ref{dim}, $-\bl+\tau_\bullet+\bw\bs(\bl)$ and $-\eta_{\bullet}+\tau_\bullet+\bw\bs(\eta_\bullet)$ are both conjugate to $\bmu$. In particular, for any $1\leq k\leq N$, \[\<\lambda^k\>-\<\eta^k\>=\e^k(\<\lambda^{k+1}\>-\<\eta^{k+1}\>).\] Therefore \[\<\lambda^k\>-\<\eta^k\>=(\prod_{i=1}^N\e^i)(\<\lambda^{k}\>-\<\eta^{k}\>)=p^f(\<\lambda^k\>-\<\eta^k\>),\] and the result follows.
\end{proof}

\begin{proof}[Proof of Theorem \ref{thm A'}]We first show each connected component $C$ contains some $C_\bmu^\bl(b)$ with $\dim C_\mu^\bl(b) =0$. Indeed, let $\bl \in Y^d$ be such that $C_\bmu^\bl(b) \cap C$ is non-empty and $d_{\bl} := \dim I^d u^\bl G^d(\co_L)/G^d(\co_L)$ is as small as possible. If $\dim C_\bmu^\bl(b) > 0$, then $C_\bmu^\bl(b)$ is irreducible of dimension $\ge 1$ by Proposition \ref{dim}. So $C_\bmu^\bl(b)$ is a closed (non-projective) subvariety of the affine space $I^d u^\bl G^d(\co_L)/G^d(\co_L)$, whose closure $\overline{C_\bmu^\bl(b)}$ must intersect $I^d u^\bchi G^d(\co_L)/G^d(\co_L)$ for some $\bchi \neq \bl$. So $C_\bmu^\bchi(\bb) \neq \emptyset$ and $d_\bchi < d_\bl$ since $$I^d u^\bchi G^d(\co_L)/G^d(\co_L) \subseteq \overline{I^d u^\bl G^d(\co_L)/G^d(\co_L)}.$$ This contradicts the choice of $\bl$. Therefore, $\dim C_\bmu^\bl(b) = 0$.

It remains to show the uniqueness of $\bl$. We need two Claims.

By assumption, $b = \dot \tw$ for some $\tw = u^\t w \in \tW$ such that the fixed point $e \in Y_\RR$ of $\tilde{w}\sigma$ lies in the fundamental alcove $\D$. Set $\be = (e, \dots, e) \in (Y_\RR)^d$ and $\btw = (1, \dots, 1, \tw) = u^{\bt} \bw$ with $\bt = (0, \dots, 0, \t)$ and $\bw = (1, \dots, 1, w)$. Let $\hbl = \bl - \be \in (\RR^n)^N$.

\emph{Claim 1}: $\hl^k = \begin{cases}\e^k w^k(\hl^{k+1}), &\text{ if } m^k = 0;\\ \varsigma(\e^k w^k(\hl^{k+1})), &\text{ if } m^k = 1.\end{cases}$

\emph{Claim 2}: there exists $1\leq k_0\leq N$ such that $h(\hl^{k_0}) = 0$.

Assume these two Claims for the moment. Suppose there exists another cocharacter $\bbeta \in Y^d$ such that $\dim C_\bmu^\bbeta(b) = 0$. Let $\hat \eta_\bullet = \bbeta - \be$. Then $\<\hl^k\> = \<\he^k\>$ for any $k$ by Lemma \ref{lemma_semimodule_sum_equal}. We may assume $\d(\hl^1) \le \d(\he^1)$, and it follows from Claim 1, Lemma \ref{delta} (4) and (5) that $\d(\hl^k) \le \d(\he^k)$ and $h(\hl^k) \le h(\he^k)$ for any $k$.  By Claim 2, there exists $k_0$ such that $h(\he^{k_0}) = 0$ and hence $h(\he^{k_0}) = 0 = h(\hl^{k_0})$. By Lemma \ref{delta} (3), we have $\hl^{k_0} = \he^{k_0}$, which implies by Claim 1 that $\hl^k = \he^k$ for any $k$ as desired. This proved the uniqueness of $\bl$.

Now we prove Claim 1 and 2. For Claim 1, as $\be = \tw_{\bullet}\bs(\be) = \bt + \bw\bs(\be)$, we have $ \bbl^\natural = -\hbl + \bw\bs(\hbl)$. Therefore we have $(\l^\natural)^k = -\hl^k + \e^k w^k(\hl^{k+1})$. As $\bmu$ is minuscule, it follows from Proposition \ref{dim} that $ \bbl^\natural$ is conjugate to $\bmu$. It's equivalent to say $(\l^\natural)^k$ and $\mu^k = m^k \o_1^\vee$ are conjugate under $\fs_n$. If $m^k = 0$, Claim 1 follows.  Now assume $m^k = 1$, then $(\l^\natural)^k$ and $\o_1^\vee$ are conjugate. So there exists $1 \le i_0 \le n$ such that $(\l^\natural)^k(i) = 1$ if $i = i_0$ and $(\l^\natural)^k(i) = 0$ otherwise. Equivalently,
\[\hl^k(i) =\begin{cases} \e^k w^k(\hl^{k+1})(i) &\text{ if } i\neq i_0\\ \e^k w^k(\hl^{k+1})(i)-1 &\text{ if } i= i_0.\end{cases}\]

 Let $x = (w^k(\hl^{k+1}))(i_0) \in [\hl^{k+1}]$. Notice that $[\hl^k]$ consists of exactly $n$ elements. In order to prove Claim 1, it suffices to show that $x = \max[\hl^{k+1}]$. If it's not the case, then $(w^k(\hl^{k+1}))(i_1) = \max{[\hl^{k+1}]} > x$ for some $1 \le i_1 \neq i_0 \le n$. Thus $\hl^k(i_1) = \e^k \max[\hl^{k+1}]$ and $\hl^k(i_0) = \e^k x - 1$. Let $\a = \a_{i_1, i_0} \in \Phi$. We will show  $\a \in R(\bl)$ which contradicts the fact that $\dim C_\bmu^\bl(b) = |R(\bl)| = 0$ by Proposition \ref{dim}.
 Indeed, we have $$\<\a, (\l^\natural)^k\> = (\l^\natural)^k(i_1) - (\l^\natural)^k(i_0) = -1,$$ and moreover, $$\<\a, \l^k\> = \<\a, \hl^k\> + \<\a, e^k\> = \e^k \max[\hl^{k+1}] - (\e^k x - 1) + \<\a, e^k\> > 1 + \<\a, e^k\>.$$ As $e^k$ lies in the fundamental alcove, we have $-1 < \<\a, e^k\> < 1$. Moreover $\<\a, \l^k\> \ge 2$ if $\a > 0$ and $\<\a, \l^k\> \ge 1$ if $\a < 0$. So $(\l^k)_\a \ge 1$ and hence $\a \in R(\bl)$. This finishes the proof of Claim 1.

For Claim 2, suppose that $h(\hl^{k+1}) \ge 1$ for any $k$. It follows from Claim 1 that $\min[\hl^k] = \e^k \min[\hl^{k+1}]$ for any $k$. In particular, $$\min[\hl^k] = (\prod_{l=1}^N \e^l) \min[\hl^k] = p^f \min[\hl^k],$$ which means that $\min[\hl^k] = 0$. This is impossible since $b$ is simple and $[e^k] \cap \ZZ = \emptyset$ by Proposition \ref{simple}. This proves Claim 2.

\end{proof}

\subsection{} Now we show the second connectedness result for the totally ramified case with $n=3$ (i.e. $G=\GL_3$).
\begin{thm}\label{thm B}Suppose $G=\mathrm{GL}_3$ and $b$ is simple, then $C_{\mu}(b)$ is geometrically connected.
\end{thm}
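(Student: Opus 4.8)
The strategy is to reduce the connectedness statement for $\GL_3$ to a purely combinatorial analysis of the semi-module strata $C_\mu^\l(b)$ and then to build explicit paths joining any two of them. By Proposition \ref{simple}, after $\s$-conjugation we may assume $b = \dot\tw$ with $\tw = u^\t w$, where the fixed point $e$ of $\tw\s$ lies in $\D \cap (\RR\setminus\ZZ)^{3f}$; concretely $\t = (\t', 0, \dots, 0)$ with $\t' = (m, 0, 0)$, $w = (w', 1, \dots, 1)$ with $w'$ the $3$-cycle, and $e = (e', pe', \dots, p^{f-1}e')$ with $e' = -\frac{1}{p^3-1}(m, mp, mp^2)$. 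As in the proof of Theorem \ref{thm A}, we may also first twist by a central cocharacter so that $\mu = (\mu_1, \dots, \mu_f)$ has each $\mu_i \in \ZZ^{3,+}$ with $\mu_{i,3} = 0$; since the totally ramified case is $k = \FF_p$, in fact $f = 1$ and $\mu = (a, b, 0)$ with $a \ge b \ge 0$ a single partition with at most two parts. The case $b \ge 1$, i.e. $\mu$ not of the form $(a, 0, 0)$ handled by Theorem \ref{thm A}, is the genuinely new one.

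First I would set up the combinatorial skeleton. Using Proposition \ref{semi-module}, the nonempty strata are indexed by $\l \in Y$ with $\l^\natural := -\l + \t + w\s(\l) \le \mu$, and by Lemma \ref{lemma_semimodule_sum_equal} (applied with $d=1$) all such $\l$ have the same value of $\<\l\>$. Writing $\hl = \l - e$, the recursion from Claim 1 in the proof of Theorem \ref{thm A'} must be replaced by the more flexible relation coming from $\l^\natural \le \mu$ rather than $\l^\natural$ conjugate to $\mu$: one gets that $\hl$ and $\varsigma^{\mathrm{ht}}(p\,w(\hl))$ differ by the data of which coordinates get lowered, governed by a dominant cocharacter $\le \mu$. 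Since $n = 3$ and $\mu$ has at most two nonzero parts, there are only a bounded number of shapes to track, and the $\d$- and $h$-functions of Lemma \ref{delta} give a height/degree bookkeeping. The plan is to show that the poset of nonempty strata, ordered by the closure relation (which is controlled by $\dim C_\mu^\l(b)$ via Proposition \ref{semi-module}(a), each stratum being connected), has a connected Hasse diagram: any two strata are joined by a chain of adjacencies, where adjacency means one stratum lies in the closure of the other. This is where the argument of Theorem \ref{thm A'} generalizes — there I produced a \emph{unique} zero-dimensional stratum and showed every component contains it; here I expect instead a small, explicitly describable set of minimal strata, all of which I must connect to each other.

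The key geometric input, beyond Proposition \ref{semi-module}(a), is a degeneration statement: if $C_\mu^\l(b)$ has positive dimension, its closure in the affine space $I u^\l G(\co_L)/G(\co_L)$ meets $I u^\chi G(\co_L)/G(\co_L)$ for suitable $\chi$ with $d_\chi < d_\l$, exactly as in the first paragraph of the proof of Theorem \ref{thm A'}; iterating, every stratum degenerates to a minimal (hence zero-dimensional) one inside the same connected component. So connectedness of $C_\mu(b)$ reduces to showing the minimal strata all lie in one connected component. For that I would exhibit, for each pair of minimal $\l, \l'$, an intermediate $\l''$ whose stratum $C_\mu^{\l''}(b)$ has dimension $\ge 1$ and whose closure meets \emph{both} $C_\mu^\l(b)$ and $C_\mu^{\l'}(b)$ — equivalently, $d_\l, d_{\l'} < d_{\l''}$ and $u^\l, u^{\l'} \in \overline{I u^{\l''} G(\co_L)/G(\co_L)} \cap C_\mu(b)$. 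Constructing such $\l''$ is the combinatorial heart: I would start from a minimal $\l$, apply the inverse of the degeneration (i.e. move to a stratum one step up in dimension by "undoing" a $\varsigma$ at the appropriate coordinate), and check using the $n=3$ constraint and $-1 < \<\a, e\> < 1$ that the resulting $\l''$ still satisfies $(\l'')^\natural \le \mu$ and that the one-dimensional stratum really does degenerate to the two target points. The representations of $\GL_3$ make $|R(\l'')| = 1$ or $2$ manageable by hand.

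The main obstacle, I expect, is precisely this last step: unlike the $\mu = (a,0,0)$ case of Theorem \ref{thm A'}, where the zero-dimensional stratum is unique and the recursion rigidly determines $\hl$ from one coordinate, the two-part $\mu$ allows genuine branching in the recursion, so several minimal strata can coexist, and I must rule out the possibility that they sit in different connected components. The $n=3$ hypothesis is used to bound the combinatorial complexity enough that one can enumerate the minimal strata and exhibit explicit connecting one-dimensional strata; I would organize the proof by first proving a structural lemma describing all $\l$ with $\l^\natural \le \mu$ in terms of the orbit of $e'$ under $\varsigma$ and permutation, then reduce to a finite check. A secondary technical point is to make the degeneration argument (closure of an affine stratum meeting a lower one) fully rigorous as a statement about $\bar\FF_p$-points and their specializations inside the ind-scheme $\mathcal F_G$, but this is essentially already done in the proof of Theorem \ref{thm A'} and only needs to be invoked.
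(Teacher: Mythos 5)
Your proposal takes a genuinely different route from the paper, but it has gaps that would need substantial work to close. The paper does \emph{not} analyse closures of semi-module strata or classify minimal strata. Instead, it uses Proposition \ref{semi-module}(a) only to reduce to connecting the lattice points $u^\lambda$ for $\lambda$ in $S := \{\lambda \in Y : \lambda^\natural \le \mu\}$, and then exhibits, for every pair $\lambda, \lambda' = \lambda - \alpha^\vee$ in $S$ differing by a coroot, an \emph{explicit} $\PP^1 \to C_\mu(b)$, namely $x \mapsto u^\lambda U_\alpha(u^{-1}x)$, with $g(0) = u^\lambda$ and $g(\infty) = u^{\lambda'}$ (the verification that $g(x) \in C_\mu(b)$ reduces to $(\lambda')^\natural \le \mu$ by an $\mathrm{SL}_2$-computation). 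This bypasses all closure considerations. The remaining content is purely combinatorial: one shows that $S$ is connected under coroot moves, using $\alpha_1^\vee + w\alpha_1^\vee + w^2\alpha_1^\vee = 0$ for the $3$-cycle $w$ to write $\lambda' - \lambda = n_1\alpha_1^\vee + n_2 w\alpha_1^\vee$ with $n_1 \ge n_2 \ge 0$ and inducting on $n_1$, the inductive step resting on the observation (Claim 3 in the paper) that $\lambda + \alpha_1^\vee \in S$ or $\lambda - \alpha_3^\vee \in S$, which is verified by a direct inequality check on $\max[\mu] - \min[\mu]$.

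Two concrete problems with your plan. First, your ``equivalently'' is false: $u^\lambda \in \overline{C_\mu^{\lambda''}(b)}$ is strictly stronger than $d_\lambda < d_{\lambda''}$ together with $u^\lambda$ lying in the closure of the affine cell $Iu^{\lambda''}G(\co_L)/G(\co_L)$, since $\overline{C_\mu^{\lambda''}(b)}$ is a proper closed subvariety of that closure; so exhibiting $\lambda''$ along those lines does not produce the desired degeneration. Second, the degeneration step you import from Theorem \ref{thm A'} (``irreducible of dimension $\ge 1$'') uses Proposition \ref{dim}, which requires $\bmu$ minuscule; once $\mu$ has two nonzero parts (the genuinely new regime beyond Theorem \ref{thm A}) $\mu$ is not minuscule, and you do not have an irreducibility or dimension statement for $C_\mu^\lambda(b)$ to lean on. Finally, you never actually construct the bridge strata $\lambda''$ or enumerate the minimal ones; this is the ``combinatorial heart'' you defer, and it is precisely the part that the paper replaces with the cleaner coroot-chain argument. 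If you want to salvage your plan, the missing input is essentially the paper's Claim 1: those one-parameter families $x \mapsto u^\lambda U_\alpha(u^{-1}x)$ give you the degenerations for free, and you could then dispense with the closure bookkeeping entirely and reduce to the coroot-connectivity of $S$.
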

\begin{proof}By Proposition \ref{Caruso} and \ref{simple}, we may assume $b=au^\tau\omega$ satisfies Lemma \ref{iota} with $a\in G(\bar\FF_p)$ central, $\tau\in Y$ and $w\in \fs_3$ of order 3. Obviously $C_{\mu}(b)= C_{\mu}(u^\tau\omega)$, so we may assume $a=1$. Let
\[S:=\{\lambda\in Y| \lambda^\natural\leq \mu\}\]
where $\lambda^{\natural}:=-\lambda+\tau+w\sigma(\lambda)$ as before. By Proposition \ref{semi-module}, it suffices to connect the points
$\{u^{\lambda}\in C_{\mu}(b)(\bar\FF_p)| \lambda\in S\}$ inside $C_{\mu}(b)$. The result follows from the following two claims.

\emph{Claim 1:} Suppose $\lambda'=\lambda-\alpha^{\vee}$ with $\lambda,\lambda'\in S$ and $\alpha^{\vee}$ is a coroot, then $u^{\lambda}$ and $u^{\lambda'}$ are in the same geometrically connected component of $C_{\mu}(b)$.

\emph{Claim 2:} For any different $\lambda, \lambda'\in S$, there exists a chain of elements $\lambda_0,\cdots\lambda_r\in S$ for some $r\in\NN$ such that $\lambda=\lambda_0$, $\lambda'=\lambda_r$ and $\lambda_{i+1}-\lambda_i$ is a coroot for any $0\leq i\leq r-1$.

We first prove Claim 1. Let $U_\alpha\subset G$ be the root subgroup of $G$ corresponding to $\alpha$. We fix an isomorphism $\GG_a\simeq U_{\alpha}$. For any $\bar\FF_p$-algebra $R$, we write $U_{\alpha}(x)\in U_{\alpha}(R)$ for the image of $x\in R$ via  $\GG_a\simeq U_{\alpha}$.

For any $x\in \bar \FF_p$, let $g(x):=u^{\lambda}U_\alpha(u^{-1}x)$.

We verify that $g(x)K\in C_{\mu}(b)(\bar\FF_p)$. Note that

\[g(x)^{-1}b\sigma g(x)=U_{\alpha}(-u^{-1}x)u^{\lambda^{\natural}}U_{w\alpha}(u^{-p}\alpha).\]

By a $\mathrm{SL}_2$-computation, we get

\[U_{\alpha}(u^{-m}x)\in U_{-\alpha}(u^mx^{-1})u^{-m\alpha^{\vee}}G(\co_L)\] for any positive integer $m$. So $g(x)^{-1}b\sigma g(x)\in \overline{ G(\co_L) u^\mu G(\co_L)}$ if
\[\lambda^{\natural}-pw\alpha^{\vee}\leq\mu, \lambda^{\natural}+\alpha^{\vee}\leq \mu \text{ and } \lambda'^{\natural}=\lambda^{\natural}+\alpha^{\vee}-pw\alpha^{\vee}\leq\mu.\]The first two conditions are implied by the third one as $\langle\alpha, w\alpha^{\vee}\rangle=-1$, and the third condition is automatic as $\lambda'\in S$.

Therefore, $g(x)$ defines an affine line $g:\AA^1\rightarrow C_{\mu}(b)$. As $C_{\mu}(b)$ is projective, $g$ extends uniquely to $g:\PP^1\rightarrow C_{\mu}(b)$. It's easy to compute $g(\infty)=u^{\lambda'}$. Hence $u^{\lambda}$ and $u^{\lambda'}$ are in the same connected component.

Now it remains to prove Claim 2. Let $\lambda, \lambda'\in S$. Then $\lambda-\lambda'$ is in the coroot lattice by Lemma \ref{lemma_semimodule_sum_equal}.

Note that $\alpha_1^\vee+w\alpha_1^{\vee}+w^2\alpha_1^{\vee}=0$ for any coroot $\alpha_1^\vee$.  It follows that we can always find a coroot $\alpha_1^{\vee}$ such that

\[\lambda'-\lambda=n_1\alpha_1^{\vee}+n_2w\alpha_1^{\vee}\]
with $n_1=\mathrm{max}\{|n_1|, |n_2|, |n_1-n_2|\}$. In particular $n_1\geq n_2\geq 0$. We will prove by induction on $n_1$. Let $\alpha_i^{\vee}:=w^{i-1}\alpha_1^{\vee}$ for $i\in\NN$.

If $n_2=n_1$ or $0$, then $\lambda'-\lambda=n_1\alpha^{\vee}$ is a multiple of the coroot $\alpha^{\vee}$ where  $\alpha_1^{\vee}$ equals to $-\alpha_3^{\vee}$ (resp. $\alpha_1^{\vee}$) if $n_2=n_1$ (resp. $n_2=0$). Then $\lambda+n\alpha^{\vee}\in S$ for $0\leq n\leq n_1$. We are done.

Now we may assume $n_1>n_2>0$.

\emph{Claim 3: } $\lambda+\alpha_1^{\vee}\in S$ or $\lambda-\alpha_3^{\vee}\in S$.

Suppose Claim 3 holds for the moment. Notice that $-\alpha_3^{\vee}=\alpha_1^{\vee}+\alpha_2^{\vee}$, we can apply induction hypothesis to the pair $(\lambda-\alpha_3^{\vee}, \lambda')$ if $\lambda-\alpha_3^{\vee}\in S$ or to the pair $(\lambda+\alpha_1^{\vee}, \lambda')$ if $\lambda+\alpha_1^{\vee}\in S$. This proves Claim 1.

Now it remains to prove Claim 3. Suppose Claim 3 does not hold. Without loose of generality, we may assume $\alpha_1=(1, -1, 0)$ and $\alpha_2=(0, 1, -1)$. Then

\[\begin{split}\lambda'^{\natural}&=\lambda^{\natural}-(n_1+pn_2)\alpha_1^{\vee}+(pn_1-(p+1)n_2)\alpha_2^{\vee}
\\ &=\lambda^{\natural}+(-n_1-pn_2, (p+1)n_1-n_2, -pn_1+(p+1)n_2).\end{split}\]
In particular,
\[\begin{split}(\lambda+\alpha_1^{\vee})^\natural&=\lambda^{\natural}+(-1, p+1, -p)
\\ (\lambda-\alpha_3^{\vee})^{\natural} &=\lambda^{\natural}+(-1-p, p, 1).\end{split}\]
Write $\lambda^{\natural}=(a_1, a_2, a_3)$. As  $\lambda'^{\natural}\leq\mu$ and $\lambda^{\natural}\leq \mu$,  we deduce that $(\lambda+\alpha_1^{\vee})^{\natural}\nleq \mu$ is equivalent to $a_3-p<\min[\mu]$ and $(\lambda-\alpha_3^{\vee})^{\natural}\nleq\mu$ is equivalent to $a_3+1>\max[\mu]$. Therefore, \[\max[\mu]-\min[\mu]<1+p.\] On the other hand, the fact that $\lambda'^{\natural}\leq\mu$ and $\lambda^{\natural}\leq \mu$ implies
\[\max[\mu]-\min[\mu]\geq n_1+pn_2\geq p+1,\]which is impossible.
 \end{proof}

\subsection{} \label{counterexmaples} In general, $C_{\mu}(b)$ is not connected. We give two counter-examples.
\begin{prop}
(a) Let $G=\GL_4$, $b=u^{(2,0,2,0)}(1243)\in \mathrm{GL}_n(\FF_p)$ and $\mu=(2p-1, p, p, 1)$. Then \[C_{\mu}(b)(\FF_p)=C_{\mu}(b)(\bar\FF_p)=\{u^{(2,1,1,0)}, u^{(1,1,1,1)}\}.\]

(b) Let $G=\mathrm{Res}_{k|\FF_p}\mathrm{GL}_3$ with $[k: \FF_p]=2$. Choose $\FF$ containing $k$. Then the group $G_{|\FF}\simeq \mathrm{GL}_3\times \mathrm{GL}_3$. Let $b=(u^{(2,0,1)}(123), u^{(0,0,1)})\in G(\FF)$ and $\mu=((p+1, 0,0), (p,p, 0))$, then \[C_{\mu}(b)(\FF)=C_{\mu}(b)(\bar\FF_p)=\{u^{\chi}, u^{\chi'}\},\] where $\chi=((1,0,1), (0,0,1))$ and $\chi'=((1,1,0), (1, 0, 0))$.
\end{prop}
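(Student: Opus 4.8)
The plan is to compute $C_\mu(b)(\bar\FF_p)$ directly from the semi-module stratification of Proposition \ref{semi-module}. I describe part (a) in detail; part (b) is handled in the same way, with $G=\mathrm{Res}_{k|\FF_p}\GL_3$ (two $\GL_3$-blocks, $f=2$) in place of $\GL_4$, after first $\sigma$-conjugating $b$ into the shape required by Lemma \ref{iota}, which is legitimate by Proposition \ref{simple} since $b$ is simple. For (a): the given $b=u^{(2,0,2,0)}(1243)$ is already of the form $\dot\tw$ with $\tw=u^\tau w$, $\tau=(2,0,2,0)$, $w=(1243)$, and solving $e=\tau+p\,w(e)$ for the fixed point $e$ of $\tw\sigma$ shows $e\in\D$ once $p>2$; moreover $b$ is simple (equivalently, the associated $\varphi$-module is irreducible), as one checks against Caruso's classification (Proposition \ref{Caruso}). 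Hence Proposition \ref{semi-module} applies: $C_\mu(b)(\bar\FF_p)=\bigsqcup_{\lambda\in Y}C_\mu^\lambda(b)(\bar\FF_p)$, each non-empty stratum is connected, and $C_\mu^\lambda(b)\neq\emptyset$ exactly when $\lambda^\natural:=-\lambda+\tau+w\sigma(\lambda)\leq\mu$.

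Since $\sigma$ acts on $Y=\ZZ^4$ as multiplication by $p$, we get $\lambda^\natural=\tau+p\,w(\lambda)-\lambda$, and taking total coordinate sums gives $\langle\lambda^\natural\rangle=4+(p-1)\langle\lambda\rangle$. As two Bruhat-comparable dominant cocharacters have equal coordinate sum, $\lambda^\natural\leq\mu$ forces $\langle\lambda^\natural\rangle=\langle\mu\rangle=4p$, hence $\langle\lambda\rangle=4$. Running over the finitely many $\lambda\in\ZZ^4$ with $\sum_i\lambda_i=4$ and testing $\lambda^\natural\leq\mu$ leaves exactly two solutions: $\lambda_1=(2,1,1,0)$, for which $\lambda_1^\natural=(p,2p-1,1,p)$ is conjugate to $\mu$, and $\lambda_2=(1,1,1,1)$, for which $\lambda_2^\natural$ has dominant form $(p+1,p+1,p-1,p-1)\leq\mu$. (In (b) the parallel search produces two strata, $u^\chi$ and $u^{\chi'}$, with $\chi^\natural$ conjugate to $\mu$ and $\chi'^\natural$ strictly below $\mu$; one also sees $u^\chi,u^{\chi'}\in C_\mu(b)(\bar\FF_p)$ directly from $\chi^\natural,\chi'^\natural\leq\mu$.)

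It remains to show each of the two non-empty strata is a single $\bar\FF_p$-point; since $\mu$ is not minuscule we cannot invoke the dimension formula of Proposition \ref{semi-module}(c), so we check the criterion of Proposition \ref{semi-module}(b). For the central $\lambda_2=(1,1,1,1)$ this is automatic: $U_{\lambda_2}^+$ is the full lower-triangular unipotent subgroup, so $I\cap U_{\lambda_2}^+=U_{\lambda_2}^+(\co_L)=u^{\lambda_2}U_{\lambda_2}^+(\co_L)u^{-\lambda_2}$, which already forces the equality in (b). For $\lambda_1=(2,1,1,0)$ one computes that $U_{\lambda_1}^+$ is supported in the matrix positions $(1,2),(1,3),(1,4),(2,4),(3,4),(3,2)$; for $n\in I\cap U_{\lambda_1}^+$, writing $n'=u^{-\lambda_1}nu^{\lambda_1}$, the Iwahori condition $n\in I$ already puts every entry of $n'$ into $\co_L$ except the $(1,4)$-entry, which a priori only lies in $u^{-1}\co_L$. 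One then unwinds $n'u^{\lambda_1^\natural}\in\overline{G(\co_L)u^\mu G(\co_L)}$ through the invariant-factor description of the right-hand side: every $2\times2$ minor of $g:=n'u^{\lambda_1^\natural}$ must have valuation $\geq\mu_3+\mu_4=p+1$, and the minor on rows $\{1,3\}$ and columns $\{3,4\}$ equals $u^{p+1}(n'_{13}n'_{34}-n'_{14})$; as $n'_{13},n'_{34}\in\co_L$, this forces $n'_{14}\in\co_L$, i.e. $n\in u^{\lambda_1}U_{\lambda_1}^+(\co_L)u^{-\lambda_1}$. Thus $C_\mu^{\lambda_1}(b)=\{u^{\lambda_1}\}$, and the analogous computation (with the corresponding block $3\times3$ minors) handles $u^\chi$ and $u^{\chi'}$ in (b).

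Consequently $C_\mu(b)(\bar\FF_p)$ is the two-element set $\{u^{\lambda_1}G(\co_L),u^{\lambda_2}G(\co_L)\}$ (resp. $\{u^\chi G(\co_L),u^{\chi'}G(\co_L)\}$), two distinct $T$-fixed points of the affine Grassmannian, so $C_\mu(b)$ is not connected. Both points lie in $G(\FF_p((u)))$ (resp. $G(\FF((u)))$) and visibly satisfy the defining condition over $\FF_p$ (resp. $\FF$), so $C_\mu(b)(\FF_p)=C_\mu(b)(\bar\FF_p)$ (resp. over $\FF$). The main difficulty is the third paragraph: pinpointing, in each example, the single minor whose valuation bound constrains the one remaining free coordinate of $n'$, and — in case (b) — carrying this through the two $\GL_3$-blocks together with the bookkeeping of the auxiliary $\sigma$-conjugation. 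The finite search in the second paragraph is routine but must be organized carefully, since a priori $\lambda$ ranges over all of $\ZZ^4$ (resp. $(\ZZ^3)^2$) before the degree constraint cuts it down.
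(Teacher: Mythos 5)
Your proposal is correct and follows the same overall strategy as the paper: decompose $C_\mu(b)$ via the semi-module stratification of Proposition~\ref{semi-module}, identify the finitely many $\l$ with $\l^\natural\leq\mu$, and show each nonempty stratum is a single point by checking the criterion of Proposition~\ref{semi-module}(b). Your observation that $\<\l^\natural\>=\<\mu\>$ forces $\<\l\>$ to equal a fixed constant is a useful explicit bound on the search that the paper leaves implicit (it just says ``one checks directly''). The place where you diverge in method is the verification of single-pointedness for the stratum whose $\l^\natural$ is conjugate to $\mu$ (namely $\l_1=(2,1,1,0)$ in (a) and $\chi$ in (b)). The paper isolates a clean general-purpose criterion: it introduces $D(\l)=\{\a\in\Phi;\l_\a\geq 0,\<\a,\l^\natural\>\leq -1\}$ and proves, using $U_\l^+u^{\l^\natural}\cap\overline{G(\co_L)u^\mu G(\co_L)}=U_\l^+(\co_L)u^{\l^\natural}U_\l^+(\co_L)$, that $C_\mu^\l(b)=\{u^\l\}$ whenever $\l^\natural$ is conjugate to $\mu$ and $\l_\a=0$ for all $\a\in D(\l)$; this reduces both examples to a finite check on roots. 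You instead argue directly at the matrix level, pinpointing a single $2\times 2$ minor of $n'u^{\l^\natural}$ whose valuation bound (from the elementary-divisor description of $\overline{G(\co_L)u^\mu G(\co_L)}$) constrains the one entry of $n'$ not already forced into $\co_L$ by $n\in I$. Your computation for $\l_1$ is correct, and the analogous minor argument does carry over to $\chi$ in part (b), but the paper's $D(\l)$-based Claim is cleaner and applies uniformly without hunting for the right minor; it is worth noting that your minor computation is exactly the coordinate-level shadow of the paper's statement $\l_\a=0$ for $\a\in D(\l)$. For the remaining strata ($\l_2$ central in (a), $\chi'$ dominant minuscule in (b)) your argument that $I\cap U_\l^+=u^\l U_\l^+(\co_L)u^{-\l}$ collapses the inclusion chain is the same as the paper's. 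One small caveat: you assert that $b$ is simple ``as one checks against Caruso's classification,'' but the $b$'s given in the Proposition are not literally in Caruso's normal form, so this requires either an explicit $\s$-conjugation or a separate irreducibility check (the paper sidesteps this since Proposition~\ref{semi-module} only needs $b$ to satisfy the fixed-point condition of Lemma~\ref{iota}, which you verify directly for part (a); simplicity is only needed for the examples to actually contradict Kisin's conjecture, which is a soft point in both your proof and the paper's).
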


\begin{proof}
For $\l \in Y$ we set $D(\l) = \{\a \in \Phi; \l_\a \ge 0, \<\a, \l^\natural\> \le -1\}$.

\emph{Claim:} if $\l^\natural$ is conjugate to $\mu$, and $\l_\a = 0$ for $\a \in D(\l)$, then $C_\mu^\l(b) = \{u^\l\}$.

Indeed, as $\l^\natural$ is conjugate to $\mu$, we have $$U_\l^+ u^{\l^\natural} \cap \overline{G(\co_L) u^\mu G(\co_L)}= U_\l^+(\co_L) u^{\l^\natural} U_\l^+(\co_L) = U_\l^+(\co_L) (\prod_{\a \in D(\l)} U_\a(u^{\<\a, \l^\natural\>} \co_L)) u^{\l^\natural}.$$ For $\a \in D$ we have $\l_\a = 0$ by assumption and hence \[I \cap  U_\a(u^{\<\a, \l\> + \<\a, \l^\natural\>} \co_L) = u^\l U_\a(\co_L) u^{-\l}.\] Thus \begin{align*} &\quad\ (I \cap U_\l^+) \cap u^\l \overline{G(\co_L) u^\mu G(\co_L)} u^{-\l^\dag} \\ & = u^\l U_\l^+(\co_L) u^{-\l} \prod_{\a \in D(\l)} I \cap  U_\a(u^{\<\a, \l\> + \<\a, \l^\natural\>} \co_L) \\ &= u^\l U_\l^+(\co_L) u^{-\l},\end{align*} which means $C_\mu^\l(b) = \{u^\l\}$ by Proposition \ref{semi-module} (b). This concludes the Claim.

In the case (a), one checks directly that $C_\mu^\l(b) \neq \emptyset$, or equivalently $\l^\natural \leq \mu$, if and only if $\l = (1, 1, 1, 1)$ or $\l = (2, 1, 1, 0)$. In the former case, we have $I u^\l G(\co_L)/G(\co_L) = u^\l G(\co_L)/G(\co_L)$ and $C_\mu^\l = \{u^\l\}$ since $\l$ is central. In the latter case, one checks that $\l$ satisfies the conditions of the Claim and it follows that $C_\mu^\l(b) = \{u^\l\}$ as desired.

In the case (b), one checks directly that $C_\mu^\l(b) \neq \emptyset$ if and only if $\l = \chi'$ or $\l = \chi$. If $\l = \chi'$, then $\l$ is dominant and minuscule, which means $I u^\l G(\co_L) / G(\co_L) = u^\l G(\co_L) / G(\co_L)$ and $C_\mu^\l(b) = \{u^\l\}$. If $\l = \chi$, then the conditions of the Claim are satisfied and hence $C_\mu^\l(b) = \{u^\l\}$ as desired.
\end{proof}

\section{Application to deformation spaces}\label{section_application}
In this section, we assume $p>2$. Suppose $\bar\rho: \Gamma_K\rightarrow \mathrm{GL}_n(\FF)$ is absolutely irreducible and flat. Here flat means that $\bar\rho$ comes from a finite flat group scheme over $\mathcal{O}_K$. Let $R^{fl}$ be the flat deformation ring of $\bar\rho$ in the sense of Ramakrishna (\cite{Ra}). It's a complete local noetherian $W(\FF)$-algebra.

We consider the conjugacy class $\{\nu\}$ of a minuscule cocharacter \[\nu: \GG_{m, \bar{\QQ}_p}\rightarrow (\mathrm{Res}_{K|\QQ_p}\mathrm{GL}_n)_{\bar{\QQ}_p}.\]  Let $T_n$ be the maximal torus of $\mathrm{GL}_n$ consisting of diagonal matrices. We may assume that $\nu$ is dominant and has image in $\mathrm{Res}_{K|\QQ_p}T_{n,\bar\QQ_p}$.  Write $\nu=(\nu_{\delta})_{\delta\in\mathrm{Hom}(K, \bar\QQ_p)}$ with \[\nu_{\delta}=(\underbrace{1,\cdots,1}_{v_{\delta}}, 0,\cdots, 0)\in X_*(T_n)=\ZZ^n.\]

We deonote by $R^{fl,\nu}$ for the quotient of $R^{fl}$ corresponding to the deformations $\xi: G_K\rightarrow \mathrm{GL}_n(\mathcal{O}_E)$ with $E$ an extension of $W(\FF)[\frac{1}{p}]$ that contains the reflex field of $\{\nu\}$, such that Hodge-Tate weights given by $\nu$, i.e., for any $a\in K$,
\[\det_E(a|D_{cris}(\xi)_K/\mathrm{Fil}^0D_{cris}(\xi)_K)=\prod_{\delta\in \Hom (K, \bar\QQ_p)} \delta(a)^{v_{\delta}}.\]

The cocharcter $\nu$ comes from a cocharacter over $\bar\ZZ_p$ that we still denote by $\nu$:
\[\nu: \GG_{m, \bar{\ZZ}_p}\rightarrow (\mathrm{Res}_{K|\ZZ_p}T_n)_{\bar{\ZZ}_p}.\] Therefore, we obtain a cocharacter
\[\mu(\nu): \GG_{m,\bar\FF_p}\stackrel{\nu\times_{\bar\ZZ_p}\bar\FF_p}{\longrightarrow}(\mathrm{Res}_{\mathcal{O}_K|\ZZ_p}(T_n))\times_{\bar\ZZ_p}\bar\FF_p\rightarrow \mathrm{Res}_{k|\FF_p}(T_n)_{\bar\FF_p},\]where the second arrow is the natural map induced from $\mathcal{O}_K\otimes_{\ZZ_p} \bar\FF_p\rightarrow k\otimes_{\FF_p}\bar\FF_p$. More concretely, write $\mu(\nu)=(\mu(\nu)_{\tau})_{\tau\in\mathrm{Hom}(k, \bar\FF_p)}$ with $\mu(\nu)_{\tau}\in X_*(T_n)=\ZZ^n$. Then for any $\tau\in\mathrm{Hom}(k, \bar\FF_p)$,
\[\mu(\nu)_{\tau}=\sum_{\delta\in \mathrm{Hom}(K, \bar\QQ_p)\atop \bar\delta=\tau}\nu_{\delta},\]where $\bar\delta$ is the embedding of the residue fields induced from $\delta$.

\begin{cor}\label{coro_application_deformation}
The scheme $\mathrm{Spec}(R^{\mathrm{fl},\nu}[\frac{1}{p}])$ is connected if one of the following two conditions holds:
\begin{enumerate}
\item $\mu(\nu)=(\mu(\nu)_{\tau})_{\tau}$ with $\mu(\nu)_{\tau}=(\mu(\nu)_{\tau,1},\cdots,\mu(\nu)_{\tau,n})$ such that $\mu(\nu)_{\tau,1}\geq\mu(\nu)_{\tau,2}=\cdots=\mu(\nu)_{\tau,n}$ for all $\tau\in\mathrm{Hom}(k,\bar\FF_p)$;
\item $K$ is totally ramified and $n=3$.
\end{enumerate}
\end{cor}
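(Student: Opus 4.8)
The plan is to deduce the statement from Theorems \ref{thm A} and \ref{thm B} by way of Kisin's identification of $\pi_0$ of the flat deformation ring with $\pi_0$ of a Kisin variety.

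\smallskip

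First I would recall the reduction to the group-theoretic situation. Since $\bar\rho$ is absolutely irreducible, so is $\bar\rho_{|\Gamma_{K_\infty}}$ by \cite{B} 3.4.3, hence the associated \'etale $\varphi$-module $(N_{\bar\rho},\Phi_{\bar\rho})\simeq ((k\otimes_{\FF_p}\FF((u)))^n,b\varphi)$ over $k\otimes_{\FF_p}\FF((u))$ is irreducible. By the discussion opening \S\ref{section_classification simple modules}, this says exactly that $b\in G(\FF((u)))$ is \emph{simple}, where $G=\mathrm{Res}_{k|\FF_p}\GL_n$; and by construction the Kisin variety $C_{\mu(\nu)}(\bar\rho)$ is the variety $C_{\mu(\nu)}(b)$ of \S\ref{section_semimodule}, the determinant condition attached to $\nu$ being precisely the cocharacter $\mu(\nu)$ described just before the statement.

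\smallskip

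Next I would invoke Kisin's theorem (\cite{K09}, as recalled in the introduction): there is a natural bijection
\[
\pi_0\!\left(\mathrm{Spec}(R^{\mathrm{fl},\nu}[\tfrac{1}{p}])\right)\;\simeq\;\pi_0\!\left(C_{\mu(\nu)}(\bar\rho)\right)=\pi_0\!\left(C_{\mu(\nu)}(b)\right),
\]
the Hodge--Tate condition given by $\nu$ corresponding on the Kisin-variety side to the determinant condition $\mu(\nu)$. Thus $\mathrm{Spec}(R^{\mathrm{fl},\nu}[\frac{1}{p}])$ is connected as soon as $C_{\mu(\nu)}(b)$ is.

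\smallskip

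Finally I would apply the main results. Under hypothesis (1) the cocharacter $\mu=\mu(\nu)=(\mu_\tau)_\tau$ satisfies $\mu_{\tau,1}\ge\mu_{\tau,2}=\cdots=\mu_{\tau,n}$ for all $\tau\in\mathrm{Hom}(k,\bar\FF_p)$, which is exactly the hypothesis of Theorem \ref{thm A}; since $b$ is simple, $C_{\mu(\nu)}(b)$ is geometrically connected whenever it is non-empty, so $\pi_0(C_{\mu(\nu)}(b))$ is a single point. Under hypothesis (2), $K$ totally ramified over $\QQ_p$ forces $k=\FF_p$, hence $f=1$ and $G=\GL_3$, and $n=3$; then Theorem \ref{thm B} applies to the simple element $b$ and again $C_{\mu(\nu)}(b)$ is geometrically connected. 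In either case $\mathrm{Spec}(R^{\mathrm{fl},\nu}[\frac{1}{p}])$ is connected. (If $C_{\mu(\nu)}(b)=\emptyset$, there are no flat deformations of $\bar\rho$ with Hodge--Tate weights $\nu$, so $R^{\mathrm{fl},\nu}[\frac{1}{p}]=0$ and the assertion is vacuous.) I do not expect a genuine obstacle here: the content is a bookkeeping reduction, and the only point requiring care is the compatibility of Kisin's bijection with the two identifications used above — namely $C_{\mu(\nu)}(\bar\rho)=C_{\mu(\nu)}(b)$ and the passage from the cocharacter $\nu$ over $\bar\QQ_p$ to $\mu(\nu)$ over $\bar\FF_p$ — which is built into the constructions recalled in the introduction and before the statement.
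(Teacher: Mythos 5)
Your proposal is correct and follows essentially the same route as the paper: reduce $\bar\rho$ absolutely irreducible to $b$ simple via the $\varphi$-module, invoke Kisin's identification (\cite{K09}, 2.4.10) of $\pi_0(\mathrm{Spec}(R^{\mathrm{fl},\nu}[\frac{1}{p}]))$ with $\pi_0(C_{\mu(\nu)}(b))$, and apply Theorems \ref{thm A} and \ref{thm B}. The added remark on the empty case and the explicit note that $K$ totally ramified forces $k=\FF_p$ are harmless elaborations of what the paper leaves implicit.
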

\begin{proof}Suppose the restriction to $G_{K_{\infty}}$ of the Tate-Twist $\bar\rho(-1)$ corresponding to an absolutely simple $\varphi$-module
\[((k\otimes_{\FF_p}\FF)^n, b\varphi)\]
of rank $n$ over $k\otimes_{\FF_p}\FF$, where $b\in \mathrm{Res}_{k|\FF_p}\GL_n(\FF)$.
By \cite{K09} 2.4.10, the connected components of $\mathrm{Spec}(R^{\mathrm{fl},\nu}[\frac{1}{p}])$ is in bijection with $C_{\mu(\nu)}(b)$. Then the result follows from Theorems \ref{thm A} and \ref{thm B}.
\end{proof}

\end{document}